\documentclass[3p,nopreprintline]{elsarticle}

\journal{}

\usepackage{amsthm}
\usepackage{mathrsfs}
\usepackage{array}
\usepackage{amsmath}
\usepackage{amsfonts}
\usepackage{amssymb}
\usepackage{float}
\usepackage{latexsym}
\usepackage{graphicx}
\usepackage{color}
\usepackage{booktabs}
\usepackage{enumitem}
\usepackage[T1]{fontenc}
\usepackage{caption}
\usepackage[
  colorlinks=true,
  linkcolor=blue,
  citecolor=blue,
  urlcolor=blue,
  pdfstartview=FitH
]{hyperref}
\usepackage{doi}
\graphicspath{{figures/}}
\newtheorem{theorem}{Theorem}[section]
\newtheorem{lemma}[theorem]{Lemma}

\newtheorem{corollary}[theorem]{Corollary}

\newtheorem{problem}[theorem]{Problem}
\newtheorem{conjecture}[theorem]{Conjecture}
\theoremstyle{definition}

\newcommand{\diam}{\operatorname{diam}}

\newcommand{\calT}{\mathcal T}

\begin{document}

\begin{frontmatter}

\title{{\bf The number of Laplacian eigenvalues of trees less than one}}
\author[a]{Jiaxin Guo$^{*\,}$}
\author[b]{Tao Hu}
\author[c]{Quanyu Tang}

\address[a]{Information Engineering University, 450001 Zhengzhou, China}
\address[b]{Department of Information, Risk, and Operations Management, McCombs School of Business,
The University of Texas at Austin, Austin, TX 78712, USA}
\address[c]{School of Mathematical Sciences, University of Science and Technology of China, Hefei 230026, P.~R.~China}

\cortext[cor1]{Email addresses: jiaxin\_guo7@126.com (Jiaxin Guo), tao\_hu@utexas.edu (Tao Hu), {tangquanyu827@gmail.com} (Quanyu Tang)}

\begin{abstract}
Let $m_T[0,1)$ denote the number of Laplacian eigenvalues of a tree $T$ that are strictly less than $1$.
Guo, Xue and Liu \cite{GXL} proved that every tree $T$ of diameter $d$ satisfies $m_T[0,1)\ge \lceil(d+1)/3\rceil$, and that this bound is sharp when $d\equiv2\pmod3$.
It is classical that every graph of diameter $d$ has domination number at least $\lceil(d+1)/3\rceil$.
In this paper, for any diameter $d$, $m_T[0,1)=\lceil(d+1)/3\rceil$ if and only if $\gamma(T)=\lceil(d+1)/3\rceil$; we characterize all trees satisfying this necessary and sufficient condition, and also prove that almost all trees satisfy $m_T[0,1)\ge\lceil(d+1)/3\rceil+1$.
\end{abstract}

\begin{keyword}
Laplacian eigenvalue \sep diameter \sep domination number
\end{keyword}

\end{frontmatter}

\section{Introduction}\label{sec:1}

In this paper, all graphs considered  are finite and simple.
Given a graph $G$ on $n$ vertices, the Laplacian matrix of $G$ is $L(G)=D(G)-A(G)$, where $D(G)$ and $A(G)$ denote the diagonal matrix of vertex degrees and the adjacency matrix of $G$, respectively.
The eigenvalues of $L(G)$ are called the Laplacian eigenvalues of $G$.
Since $L(G)$ is positive semidefinite and singular with the all-ones vector as an eigenvector, we arrange its eigenvalues in nondecreasing order as $0=\mu_1(G)\le \mu_2(G)\le \cdots \le \mu_n(G)$.
For an interval $I\subseteq\mathbb R$, we write $m_G I$ for the number of Laplacian eigenvalues of $G$ lying in $I$, that is, $m_G I=|\{i:\mu_i(G)\in I\}|$.
In particular, $m_G[0,1)=|\{i:\mu_i(G)<1\}|$ is the number of Laplacian eigenvalues of $G$ which are strictly less than $1$.

The distribution of Laplacian eigenvalues can reveal useful structural information of a graph.
Sin~\cite{Sin2020} and Jacobs, Oliveira and Trevisan~\cite{JacobsOliveiraTrevisan2021} independently proved that every tree of order $n$ has at least $\lceil n/2\rceil$ Laplacian eigenvalues less than its average degree $2-2/n$.
Wang, Yan, Fang, Geng and Tian~\cite{Wang2020} proved that a connected graph $G$ of order $n$ satisfies $m_G(n-1,n]\le\kappa(G)$ and $m_G(n-1,n]\le\chi(G)-1$, where $\kappa(G)$ and $\chi(G)$ are its vertex-connectivity and chromatic number.
Ahanjideh, Akbari, Fakharan and Trevisan~\cite{Ahanjideh2022} obtained $m_G(n-\alpha(G),n]\le n-\alpha(G)$, where $\alpha(G)$ is the independence number, and proved that $m_G(n-d+3,n]\le n-d-1$ for every connected graph of diameter $d\ge4$; they also characterized the bipartite graphs satisfying $m_G[0,1)=\alpha(G)$.

For vertices $u,v\in V(G)$, let $d_G(u,v)$ denote their distance.
The diameter of a connected graph $G$ is $\diam(G)=\max_{u,v\in V(G)}d_G(u,v)$, and a path of length $\diam(G)$ is called a \emph{diametral path}.
For convenience, we write $\diam(G)$ as $d(G)$.
A vertex of degree one is a \emph{pendant vertex}.
A set $D\subseteq V(G)$ is a dominating set if every vertex in $V(G)\setminus D$ has a neighbour in $D$.
The domination number $\gamma(G)$ is the minimum cardinality of a dominating set of $G$.

The diameter is a central parameter in the study of Laplacian eigenvalue distribution.
Xu and Zhou~\cite{XuZhou2023} proved that if $G$ is a connected graph of order $n$ and diameter $d\ge2$ with $G\not\cong P_{d+1}$, then $m_G[n-d+2,n]\le n-d$, and they completely characterized the equality cases.
They subsequently proved that $m_G[n-d+1,n]\le n-d+1$ whenever $1\le d\le n-3$~\cite{XuZhou2024}.
When considering the small Laplacian eigenvalues, the classical domination bound gives $\gamma(T)\ge(d+1)/3$ for every tree $T$ of diameter $d$.
Xue, Liu, Yu and Shu~\cite{Xue2020} proved that equality holds if and only if $1$ is a Laplacian eigenvalue of multiplicity $n-d$, and they characterized all trees with this property.
Guo, Xue and Liu~\cite{GXL} subsequently established $m_T[0,1)\ge(d+1)/3$ and characterized all trees attaining this unrounded bound, which is possible only when $d\equiv2\pmod3$.

Related questions have recently been studied beyond trees.
For unicyclic graphs, Moon and Park~\cite{MoonPark2025} proved the lower bound $m_G[0,1)\ge\lceil d/3\rceil+\lceil g/6\rceil-1$ in terms of the diameter $d$ and girth $g$, and determined the exact value for certain lollipop graphs.
Barik and Jena~\cite{BarikJena2026} subsequently characterized the unicyclic graphs satisfying $m_G[0,1)=\gamma(G)$ and also proved $m_G[0,2)\le n-\gamma(G)$ for unicyclic graphs of order $n$, including a characterization of equality.

The special eigenvalue $1$ also admits useful structural reduction methods.
Barik, Lal and Pati~\cite{BLP} described how this multiplicity changes when two trees are joined by an edge, and their theorem is recalled in Section~\ref{sec:2}.
Writing $p(T)$ for the number of pendant vertices and $q(T)$ for the number of vertices adjacent to pendant vertices, Tian, Wang and Song~\cite{TianWangSong2025} introduced a reduced tree $T_{\rm r}$ and proved $m_T(1)=p(T)-q(T)+m_{T_{\rm r}}(1)$; for a reduced tree of order $n\ge6$, they obtained the sharp bound $m_T(1)\le(n-2)/4$ and determined all equality cases.
Tian and Wong~\cite{TianWong2026} extended the reduction formula to general graphs and proved that a reduced tree of order $n\ge6$ without a pendant path $P_3$ satisfies $m_T(1)\le(n-6)/4$, again with a complete characterization of equality.
These results provide a theoretical basis for the subsequent arguments.

Among these parameters, domination is especially closely connected with the eigenvalues less than $1$.
Hedetniemi, Jacobs and Trevisan proved the following fundamental bound.

\begin{theorem}[{\cite{HJT}}]
\label{thm:HJT}
If $G$ is a graph with domination number $\gamma(G)$, then $m_G[0,1)\le \gamma(G)$.
\end{theorem}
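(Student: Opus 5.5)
The plan is to prove that if $D=\{v_1,\dots,v_\gamma\}$ is a dominating set of $G$ with $\gamma=\gamma(G)$, then $L(G)$ has at most $\gamma$ eigenvalues in $[0,1)$. By Courant--Fischer it suffices to exhibit a subspace $W\subseteq\mathbb R^n$ of codimension at most $\gamma$ such that $x^{\top}L(G)x\ge \|x\|^2$ for every $x\in W$. Indeed, if $L(G)$ had $\gamma+1$ eigenvalues below $1$, their eigenvectors would span a $(\gamma+1)$-dimensional space $U$ on which $x^{\top}Lx<\|x\|^2$ for $x\neq 0$. Such a $U$ would meet $W$ nontrivially, which is a contradiction.

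To build $W$, first partition $V(G)$ into stars. Assign each vertex $u\notin D$ to one dominator $v_i\in D$ adjacent to it. This gives vertex-disjoint stars $S_i$ centred at $v_i$ whose union is $V(G)$; some stars may be single vertices. Since $L(G)=\sum_{e}L_e$ is a sum of positive semidefinite edge Laplacians, deleting every edge not inside some $S_i$ can only decrease the quadratic form. So $x^{\top}L(G)x\ge \sum_i x^{\top}L(S_i)x$, where $L(S_i)$ acts on the coordinates in $S_i$.

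Now handle each star separately. A star $K_{1,k}$ has Laplacian eigenvalues $0$, $1$ (with multiplicity $k-1$) and $k+1$. So on the orthogonal complement of the constant vector on $S_i$, the form $x^{\top}L(S_i)x$ is at least $\|x|_{S_i}\|^2$. When $S_i$ is a single vertex, that complement is $\{0\}$, and the inequality holds trivially. Let
\[ W=\Bigl\{x:\ \textstyle\sum_{u\in S_i}x_u=0 \text{ for } i=1,\dots,\gamma\Bigr\}, \]
which is cut out by $\gamma$ linear conditions. For $x\in W$, summing the star inequalities gives $x^{\top}Lx\ge\sum_i\|x|_{S_i}\|^2=\|x\|^2$, as required.

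The step needing the most care is choosing the star partition so that every vertex is covered, every part is a star centred at a dominator, and no vertex is counted twice. Taking each non-dominating vertex to exactly one dominating neighbour handles all three conditions. The only other point to check is the spectrum of $K_{1,k}$ and the degenerate case $k=0$, and the argument above covers both.
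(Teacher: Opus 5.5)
Your proof is correct and is essentially the standard Hedetniemi--Jacobs--Trevisan argument, which the paper cites without proof: take a spanning star forest $F$ with $\gamma$ components centred at the dominators, observe that each star has exactly one Laplacian eigenvalue in $[0,1)$, and use that adding edges cannot decrease Laplacian eigenvalues. Your subspace $W$ is exactly $(\ker L(F))^{\perp}$, so your Courant--Fischer step is just a variational repackaging of $\mu_i(G)\ge\mu_i(F)$.

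One wording point: $G[S_i]$ may contain edges between two vertices assigned to the same dominator. So you should discard every edge other than the star edges $v_iu$, not only the edges joining different $S_i$; this is harmless, since each edge Laplacian is positive semidefinite.
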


Theorem~\ref{thm:HJT} gives an upper bound for $m_G[0,1)$ and, at the same time, a spectral lower bound for $\gamma(G)$.
Together with the diameter lower bound above, it shows that a tree with large diameter must have many Laplacian eigenvalues less than $1$, while their number cannot exceed its domination number.
This inequality is sharp, and equality can be achieved under certain conditions.

The result of Guo, Xue and Liu characterizes equality in the unrounded bound
$m_T[0,1)\ge(d+1)/3$, which can occur only when $d\equiv2\pmod3$.
The equality cases for the full ceiling bound
$m_T[0,1)\ge\lceil(d+1)/3\rceil$ remained open when
$d\not\equiv2\pmod3$.
At the end of their paper, they posed the following problem and conjecture.

\begin{problem}[{\cite{GXL}}]
Characterize all trees $T$ satisfying the equality $m_T[0,1)=\lceil(d(T)+1)/3\rceil$.
\end{problem}

For the conjecture, let $\calT_n$ denote the set of unlabeled trees of order $n$, and define
$$ \calT_n^* := \left\{ T\in\calT_n: m_T[0,1)=\left\lceil\frac{d(T)+1}{3}\right\rceil \right\}.$$

\begin{conjecture}[{\cite{GXL}}]
With the notation above,
$$\lim_{n\to\infty}\frac{|\mathcal T_n^*|}{|\mathcal T_n|}=0.$$
\end{conjecture}

In this paper we solve the problem and prove the conjecture.
To state the main result, let $P=v_0v_1\cdots v_d$ be a diametral path.
Adding pendant vertices at $v_i$ means joining new vertices of degree one to $v_i$.
To attach $K_{1,t}$ at $v_i$, where $t\ge1$, we take a star with center $u$ and add the edge $uv_i$.

\begin{theorem}
\label{thm:classification-intro}
Let $T$ be a tree of diameter $d$.
Then $m_T[0,1)=\lceil(d+1)/3\rceil$ if and only if $T$ has a diametral path $P=v_0v_1\cdots v_d$ such that $T$ belongs to one of the following two classes.

\medskip

\noindent
\textbf{(i) Caterpillars.}
There is a minimum dominating set $D$ of the path $P$ such that $T$ is obtained from $P$ by adding any number of pendant vertices, possibly zero, only at internal vertices $v_i\in D$, and by attaching no other vertices.

\medskip

\noindent
\textbf{(ii) Trees with one attached star.}
Here $d=3k$ for some $k\ge2$.
There is an integer $r$ with $1\le r\le k-1$ and an integer $t\ge1$ such that $T$ is obtained as follows.
First take the path $P=v_0v_1\cdots v_{3k}$.
Take a star $K_{1,t}$ with center $u$ and add the edge $uv_{3r}$.
In addition, add any number of pendant vertices, possibly zero, only at vertices in
$ \{v_1,v_4,\ldots,v_{3r-2}\} \cup \{v_{3r+2},v_{3r+5},\ldots,v_{3k-1}\},$
and attach no other vertices.
\end{theorem}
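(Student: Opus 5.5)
We prove the theorem in two steps. First we reduce the spectral condition to a domination condition. Then we classify the trees with minimum domination number relative to their diameter. Throughout write $k_0=\lceil(d+1)/3\rceil$.

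By Theorem~\ref{thm:HJT} and the lower bound of Guo, Xue and Liu, $k_0\le m_T[0,1)\le\gamma(T)$, so $\gamma(T)=k_0$ forces $m_T[0,1)=k_0$. For the converse we argue by contraposition: if $\gamma(T)\ge k_0+1$, we exhibit $k_0+1$ eigenvalues below $1$. The plan is to find a subforest $F$ of $T$, obtained by deleting edges, consisting of $k_0+1$ components, each of which contributes an eigenvalue below $1$ (for example paths $P_2$, $P_3$, or stars are too weak, so we use components whose Laplacian has an eigenvalue $<1$ with a suitable test vector). Edge deletion only decreases Laplacian eigenvalues in the Loewner order, but here we need the reverse. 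So instead we use test vectors directly: take vectors $x_1,\dots,x_{k_0+1}$ with disjoint, pairwise non-adjacent supports, each satisfying $x_j^{\top}L(T)x_j<\|x_j\|^2$. The Courant--Fischer theorem then gives $m_T[0,1)\ge k_0+1$. Along the diametral path, supports on blocks of three consecutive vertices $v_{3i},v_{3i+1},v_{3i+2}$ with a separating gap give $k_0$ such vectors, following the proof in \cite{GXL}. The extra vertex or branch that forces $\gamma(T)>k_0$ must supply room for one more: either a branch of depth $\ge2$, or a pendant vertex at a position where no minimum dominating set of $P$ can absorb it, shifts the blocks and frees a slot. We intend to organize this by cases on $d \bmod 3$. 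The slack is $3k_0-(d+1)\in\{0,1,2\}$, and the cases $d\equiv 0,1\pmod 3$ have slack to exploit. Alternatively, the multiplicity-of-$1$ edge-joining theorem of Barik, Lal and Pati from Section~\ref{sec:2} can be used inductively to remove pendant vertices while tracking $m_T[0,1)$.

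Given the equivalence, it remains to characterize trees with $\gamma(T)=k_0$, which is purely combinatorial.

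\emph{Sufficiency.} In class (i), $D$ dominates $P$ and all added leaves, so $\gamma(T)=|D|=k_0$. In class (ii), with $d=3k$ we have $k_0=k+1$. The set $\{v_1,v_4,\dots,v_{3r-2},u,v_{3r+2},\dots,v_{3k-1}\}$ has size $r+1+(k-r)=k+1$ and dominates $T$: $v_{3r}$ is dominated by $u$, and $v_{3r-1},v_{3r+1}$ are dominated by $v_{3r-2},v_{3r+2}$.

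\emph{Necessity.} Let $D$ be a minimum dominating set of $T$ of size $k_0$. Each vertex dominates at most three vertices of $P$, and $P$ has $d+1\ge 3k_0-2$ vertices. A counting argument along $P$ then shows the following. If every vertex of $D$ lies on $P$, then $D$ dominates $P$ minimally. Any vertex off $P$ must be a leaf adjacent to some $v_i\in D$; it must hang at an internal vertex, since a leaf at $v_0$ would contradict diametrality. This gives class (i). If some $w\in D$ lies off $P$, it dominates at most one vertex of $P$. The counting then forces $d\equiv0\pmod3$, exactly one such $w$, and $w$ adjacent to $v_{3r}$ with the rest of $D$ rigidly at positions $\equiv1\pmod 3$ before $v_{3r}$ and $\equiv2\pmod 3$ after it. Vertices outside $P\cup N[w]$ must be leaves at those positions, and diametrality gives $1\le r\le k-1$. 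This yields class (ii). The main bookkeeping here is the counting when $d\equiv 1,2\pmod 3$, where the slack must be tracked carefully.

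\emph{Main obstacle.} We expect the hardest step to be showing that $\gamma(T)>k_0$ implies $m_T[0,1)\ge k_0+1$. In particular, constructing the extra test vector uniformly across the residue classes of $d$ is difficult. For this we would first try the Barik--Lal--Pati reduction: start from the obstruction found in the characterization, which is a minimal subtree violating (i) and (ii), verify $m=k_0+1$ for these finitely many configuration types, and then show that attaching further vertices does not decrease $m_T[0,1)$.
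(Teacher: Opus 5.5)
Your proposal has a genuine gap: the implication $\gamma(T)\ge k_0+1\Rightarrow m_T[0,1)\ge k_0+1$ is never proved, and the test-vector scheme you offer for it fails.

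\textbf{What works.} The direction $\gamma(T)=k_0\Rightarrow m_T[0,1)=k_0$ is immediate. Your slack count also classifies the trees with $\gamma(T)=k_0$: $3k_0-(d+1)\in\{0,1,2\}$, a path vertex covers at most three vertices of $P$, and an off-path vertex covers at most one. One small omission: if the off-path dominator $w$ has no neighbour besides $v_{3r}$, the tree lies in class (i), not (ii), since (ii) requires $t\ge1$.

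\textbf{Why the missing implication is the whole problem.} By your own classification, it says that every tree outside classes (i) and (ii) has at least $k_0+1$ Laplacian eigenvalues below $1$. That is the entire necessity half of the theorem; the paper obtains $\gamma(T)=k_0$ only afterwards, as Corollary~\ref{cor:domination-equivalence}.

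\textbf{Why the test vectors fail.} Take disjoint, pairwise non-adjacent supports on a path. An interval support with outside neighbours at both ends needs at least three vertices to have Rayleigh quotient below $1$; for two vertices the form is $a^2+(a-b)^2+b^2\ge a^2+b^2$. An end block needs at least two vertices. So blocks plus separating gaps give at most $(d+4)/4$ vectors. For $P_{12}$ this is $3$, while $m_{P_{12}}[0,1)=4$. The scheme therefore does not even recover $k_0$. That bound is obtained instead by contracting $T$ onto $P_{d+1}$ (Corollary~\ref{cor:contraction} with Lemma~\ref{lem:path-count}).

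\textbf{Your fallback plan.} Minimal obstructions, Barik--Lal--Pati, and monotonicity under adding vertices is in fact the paper's route, but none of its substance is supplied.
First, Lemma~\ref{lem:joining-eigenvalue-one} controls only the multiplicity $m_T(1)$. To track $m_T[0,1)$ you need Lemma~\ref{lem:cut-additivity}: $m[0,1)$ is additive when two trees are joined at a vertex that is nonzero for the eigenvalue $1$. Proving this requires an extra rank-one interlacing argument that counts eigenvalues above $1$.
Second, the obstructions are not finitely many configurations. You must exclude vertices at distance $\ge3$ from $P$ (Lemma~\ref{lem:depth-three}, via a pendant $P_3$). You must then handle the families (F1)--(F4) for $d\equiv1$ and (C1)--(C3) for $d\equiv0\pmod3$, which are infinite in $d$ and in the attachment positions. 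Each needs a cut edge with a nonzero end and an evaluation of both pieces, using Lemmas~\ref{lem:path-count}, \ref{lem:nonzero-tail}, \ref{lem:wrong-leaf} and $m_{T_6}[0,1)=3$. The case $d\equiv2$ needs the characterization of Guo, Xue and Liu or an analogous argument.
Third, passing from an obstruction subtree containing $P$ back to $T$ uses Lemma~\ref{lem:pendant-monotonicity}.

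As written, your proposal proves only sufficiency.
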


The description in (i) has the following explicit form in terms of the positions where pendant vertices may be attached.
The set of vertices receiving new pendant vertices may be any subset of one of the following admissible sets of vertices:
\begin{enumerate}[label=\textup{(\alph*)}]
\item if $d=3k+2$, this set is uniquely $v_1,v_4,\ldots,v_{d-1}$;
\item if $d=3k+1\ge4$, it starts at $v_1$, ends at $v_{d-1}$, and its consecutive index gaps consist of one gap of length $2$ and gaps of length $3$ elsewhere;
\item if $d=3k$, it starts at $v_1$, ends at $v_{d-1}$, and its consecutive index gaps consist either of one gap of length $1$ and gaps of length $3$ elsewhere, or of two gaps of length $2$
and gaps of length $3$ elsewhere.
\end{enumerate}
Figure~\ref{fig:admissible-sets} displays the three gap patterns in the last two items.
In this figure, the vertices of a minimum dominating set are written as $D=\{u_1,\ldots,u_q\}$ in their order along $P$ and are colored red, while the remaining vertices of $P$ are white.

\begin{figure}[H]
\centering
\includegraphics[width=0.82\textwidth]{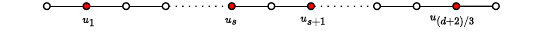}
\par
{\small \textup{(a)} $d\equiv1\pmod3$: one gap of length $2$.}\par
\vspace{20pt}
\includegraphics[width=0.90\textwidth]{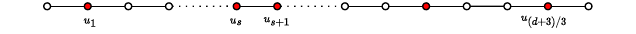}
\par
{\small \textup{(b)} $d\equiv0\pmod3$: one gap of length $1$.}\par
\vspace{20pt}
\includegraphics[width=0.94\textwidth]{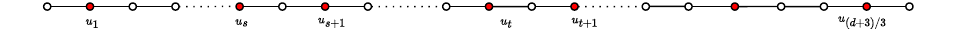}
\par
{\small \textup{(c)} $d\equiv0\pmod3$: two gaps of length $2$, which may be adjacent.}
\caption{The admissible minimum dominating sets governing the attachment positions in class \textup{(i)}.}
\label{fig:admissible-sets}
\end{figure}

The small cases $d=0$ and $d=1$ give the bare paths $P_1$ and $P_2$.
In particular, item~\textup{(a)} recovers the non-ceiling equality case of Guo, Xue and Liu; it is also consistent with the multiplicity formulation for the eigenvalue $1$ obtained by Xue, Liu, Yu and Shu~\cite{Xue2020}.
Figure~\ref{fig:type-I} shows a caterpillar of class \textup{(i)} in the case $d\equiv1\pmod3$, and Figure~\ref{fig:type-II} shows a tree of class \textup{(ii)} in the case $d\equiv0\pmod3$.

\begin{figure}[H]
\centering
\includegraphics[width=0.94\textwidth]{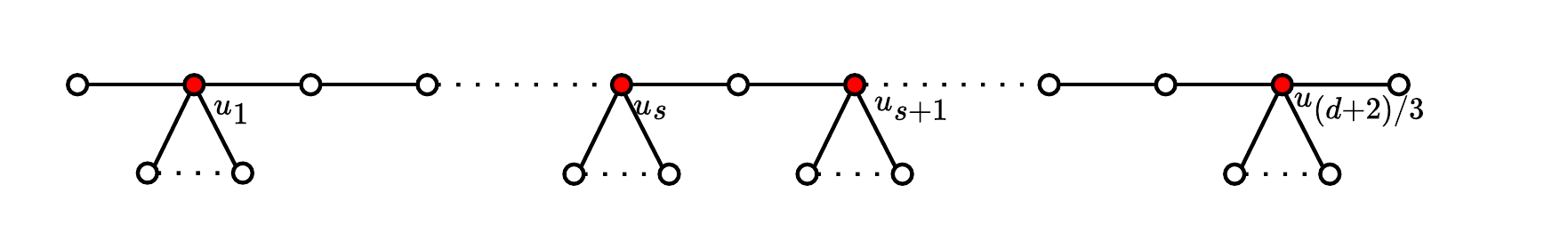}
\caption{A caterpillar in the case $d\equiv1\pmod3$.}
\label{fig:type-I}
\end{figure}

\begin{figure}[H]
\centering
\includegraphics[width=0.94\textwidth]{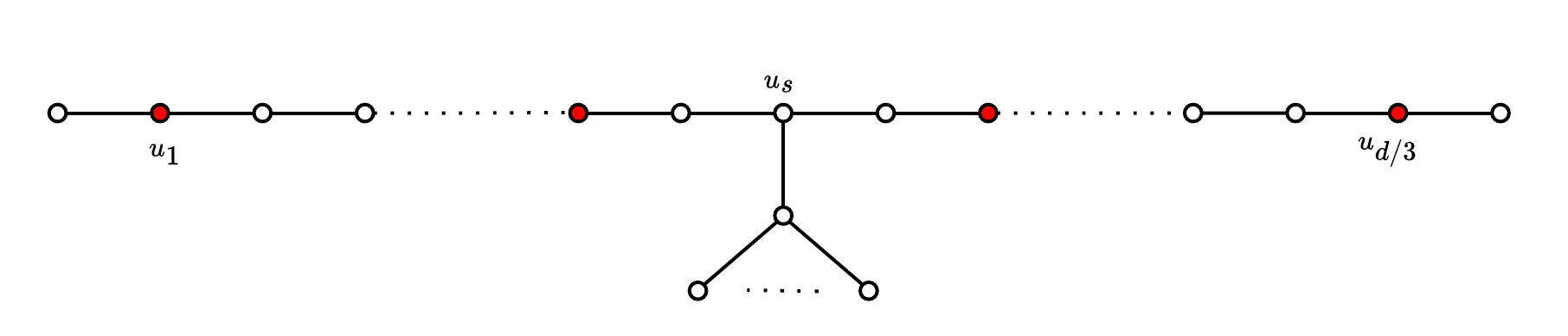}
\caption{The second class for $d\equiv0\pmod3$.}
\label{fig:type-II}
\end{figure}

Our second main result proves the asymptotic conjecture.

\begin{theorem}
\label{thm:asymptotic-intro}
Let $\calT_n$ be the set of unlabeled trees of order $n$, and let
$$ \calT_n^* = \left\{ T\in\calT_n: m_T[0,1)=\left\lceil\frac{d(T)+1}{3}\right\rceil \right\}.$$
Then $\lim_{n\to\infty}\frac{|\calT_n^*|}{|\calT_n|}=0$.
Equivalently, almost all trees satisfy $m_T[0,1)\ge \lceil(d(T)+1)/3\rceil+1$.
\end{theorem}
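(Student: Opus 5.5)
The plan is to rely on the classification in Theorem~\ref{thm:classification-intro}, which we take as proved, and to show that the trees it describes are a vanishing fraction of $\calT_n$. By Theorem~\ref{thm:classification-intro}, every $T\in\calT_n^*$ is either a caterpillar of class (i) or a tree of class (ii). In both classes, deleting the pendant vertices attached to the diametral path leaves a path, or a path with one attached star $K_{1,t}$ hanging from $v_{3r}$. Hence every such tree has at most one vertex off $P$ whose degree exceeds one, besides the path vertices themselves. In particular every $T\in\calT_n^*$ is a caterpillar, or a caterpillar with one extra vertex $u$ carrying leaves. So $T$ is determined by a small amount of data:
\begin{itemize}
\item the diameter $d$;
\item the numbers $a_1,\dots,a_{d-1}\ge 0$ of pendant vertices at the internal path vertices;
\item in class (ii), the pair $(r,t)$.
\end{itemize}

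First I count these trees. For class (i), fix $d$ and an admissible dominating set $D$. The tree is determined by a composition of the $n-d-1$ extra vertices into $|D\setminus\{v_0,v_d\}|$ nonnegative parts. The admissible sets in items (a)--(c) are at most $O(d^2)$ in number, being specified by the positions of at most two special gaps. The number of parts is about $d/3$. This gives at most $O(d^2)\binom{n}{d/3}$ trees for each $d$. A cruder bound suffices: the tree is determined by where the extra vertices sit along a path, so the number of class (i) trees of order $n$ is at most the number of caterpillars of order $n$, roughly $2^{n}$. Concretely, a caterpillar is determined by a composition of $n$, so there are at most $2^{n-1}$ of them. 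For class (ii), the extra parameters $r\le k$ and $t\le n$ add a factor $O(n^2)$. Thus $|\calT_n^*|\le n^{3}2^{n}$.

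Second I compare with $|\calT_n|$. By Otter's classical asymptotic, $|\calT_n|\sim C\alpha^n n^{-5/2}$ with $\alpha\approx 2.9558>2$. Therefore
$$\frac{|\calT_n^*|}{|\calT_n|}\le \frac{n^{3}2^{n}}{C\alpha^n n^{-5/2}}=O\!\left(n^{11/2}(2/\alpha)^n\right)\to 0 .$$
If citing Otter is undesirable, an elementary exponential lower bound is enough. For example, attach to a path of length $m$ pendant paths of length $1$ or $2$ at each vertex, which gives roughly $3^{n/3}$ distinct trees; but $3^{1/3}<2$, so this fails. A better elementary choice is to count rooted trees of height at most $2$ hung on a path, or simply to cite Otter, which I would do.

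The main obstacle is purely the careful reduction showing that every tree in $\calT_n^*$ is a caterpillar or a caterpillar plus one star, so that the crude $2^n$-type bound applies. This follows directly from the explicit description in Theorem~\ref{thm:classification-intro}, since no vertices other than pendant vertices at path vertices, and in class (ii) the single star, are attached. The equivalent formulation then follows. Every tree satisfies $m_T[0,1)\ge\lceil(d+1)/3\rceil$ by the result of Guo, Xue and Liu, so every $T\notin\calT_n^*$ satisfies $m_T[0,1)\ge\lceil(d+1)/3\rceil+1$, and such trees form a proportion tending to $1$.
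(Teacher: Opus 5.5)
Your proposal is correct and takes essentially the paper's route. Like the paper, you use Theorem~\ref{thm:classification-intro} to reduce to caterpillars and caterpillars with one attached star, bound their number by a polynomial times $2^n$, and compare with Otter's asymptotic $|\calT_n|\sim C_0\alpha^n n^{-5/2}$ with $\alpha>2$. Your counting is cruder: you use compositions of $n$ plus an extra $O(n^2)$ factor for $(r,t)$, while the paper's stars-and-bars sums give the sharper $(n+2)2^{n-3}$; either bound suffices.
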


The paper is organized as follows.
Section~\ref{sec:2} gives the spectral preliminaries and proves that every vertex of a tree attaining equality has distance at most $2$ from a diametral path.
Section~\ref{sec:3} applies these tools to describe and exclude the forbidden structures along a diametral path.
Section~\ref{sec:4} proves Theorem~\ref{thm:classification-intro}.
Section~\ref{sec:5} proves Theorem~\ref{thm:asymptotic-intro} by directly enumerating the two equality families.

\section{Preliminaries}\label{sec:2}

We first record the classical domination lower bound.

\begin{lemma}[{\cite{HHS}}]
\label{lem:dom-lower}
If $T$ is a tree of diameter $d$, then $\gamma(T)\ge \lceil(d+1)/3\rceil$.
\end{lemma}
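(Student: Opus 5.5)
The plan is to restrict a dominating set to a diametral path and then count how many path vertices a single dominator can cover. Let $D$ be a minimum dominating set of $T$, so $|D|=\gamma(T)$. Fix a diametral path $P=v_0v_1\cdots v_d$, which has $d+1$ vertices. Every vertex of $P$ must be dominated by some vertex of $D$.

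The key claim is that each vertex $x\in D$ dominates at most three vertices of $P$. The closed neighbourhood $N[x]$ meets $P$ in a set of vertices that pairwise have distance at most $2$ in $T$. Since $P$ is a shortest path (it is geodesic, being diametral in a tree), the distance in $T$ between $v_i$ and $v_j$ equals $|i-j|$. Hence the indices of the vertices in $N[x]\cap V(P)$ lie in an interval of length at most $2$. This gives
\[
|N[x]\cap V(P)|\le 3 .
\]

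Since $D$ is dominating, we have
\[
V(P)\subseteq \bigcup_{x\in D}\bigl(N[x]\cap V(P)\bigr),
\]
and therefore $d+1\le 3|D|=3\gamma(T)$. Because $\gamma(T)$ is an integer, it follows that $\gamma(T)\ge\lceil (d+1)/3\rceil$.

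No step is a real obstacle. The only point needing care is the geodesic property, namely that $d_T(v_i,v_j)=|i-j|$. This holds because the unique path between two vertices of a tree is the subpath of $P$ joining them. For the same reason the argument works for any connected graph, since a diametral path there is also a shortest path.
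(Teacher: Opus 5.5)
Your argument is correct. The paper gives no proof of its own and simply cites Haynes--Hedetniemi--Slater; your counting argument, that each vertex of a dominating set covers at most three vertices of a geodesic diametral path, is the classical proof of that bound.

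One caveat concerns your closing remark on general connected graphs. There you must take $P$ to be a shortest path between two vertices at distance $d$. Under the paper's literal definition, a diametral path is any path of length $\diam(G)$, and such a path need not be geodesic, so a single vertex could dominate more than three of its vertices. For trees the two notions coincide, so your main proof is unaffected.
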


The following contraction result will be used repeatedly.

\begin{lemma}[{\cite{GMS}}]
\label{lem:contraction}
Let $T$ be a tree on $n$ vertices with Laplacian eigenvalues $\mu_1(T)\le\cdots\le\mu_n(T)$, and let $T'$ be the tree obtained from $T$ by contracting an edge.
Then $T'$ has $n-1$ vertices and
$$ \mu_i(T)\le\mu_i(T')\qquad\text{for }1\le i\le n-1. $$
\end{lemma}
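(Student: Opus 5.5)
The plan is to realize the Laplacian of $T'$ as a compression of $L(T)$ and then apply the Courant--Fischer min--max principle. Let $e=uv$ be the contracted edge and let $w$ be the merged vertex of $T'$, so that $V(T')=(V(T)\setminus\{u,v\})\cup\{w\}$. I will define the linear map $Q:\mathbb R^{V(T')}\to\mathbb R^{V(T)}$ by $(Qy)_x=y_x$ for $x\ne u,v$ and $(Qy)_u=(Qy)_v=y_w$.

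The first step is the identity $L(T')=Q^{\top}L(T)Q$, which I will check on quadratic forms. For $y\in\mathbb R^{V(T')}$,
$$ (Qy)^{\top}L(T)(Qy)=\sum_{ab\in E(T)}\bigl((Qy)_a-(Qy)_b\bigr)^2 . $$
The edge $e$ contributes $0$, because $(Qy)_u=(Qy)_v$. Every other edge of $T$ corresponds bijectively to an edge of $T'$: an edge $au$ or $av$ becomes $aw$. Each such edge contributes exactly its term in $y^{\top}L(T')y$. Since $T$ is a tree, $u$ and $v$ have no common neighbour, so no parallel edges are created. Both matrices are symmetric, so the identity follows. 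Two further facts will be needed. First, $Q$ is injective. Second, $\|Qy\|^2=\|y\|^2+y_w^2\ge\|y\|^2$.

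The second step is the min--max comparison. Fix $1\le i\le n-1$. By Courant--Fischer,
$$ \mu_i(T')=\min_{\dim S=i}\ \max_{0\ne y\in S}\frac{y^{\top}L(T')y}{y^{\top}y}. $$
Take an $i$-dimensional subspace $S\subseteq\mathbb R^{V(T')}$ attaining this minimum. Because $Q$ is injective, $QS$ is an $i$-dimensional subspace of $\mathbb R^{V(T)}$. For $0\ne y\in S$, the numerator $(Qy)^{\top}L(T)(Qy)$ is nonnegative, since $L(T)$ is positive semidefinite, and $\|Qy\|^2\ge\|y\|^2$. Hence
$$ \frac{y^{\top}L(T')y}{y^{\top}y}=\frac{(Qy)^{\top}L(T)(Qy)}{\|y\|^2}\ \ge\ \frac{(Qy)^{\top}L(T)(Qy)}{\|Qy\|^2}. $$
Taking the maximum over $y\in S$ gives
$$ \mu_i(T')\ \ge\ \max_{0\ne z\in QS}\frac{z^{\top}L(T)z}{z^{\top}z}\ \ge\ \mu_i(T), $$
where the last inequality is again Courant--Fischer, now for $L(T)$.

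I expect no real obstacle. The only point needing care is the direction of the inequality between the two norms. It works out only because the Rayleigh numerator is nonnegative, so dividing by the larger quantity $\|Qy\|^2$ can only decrease the quotient. The count of vertices, $n-1$, is immediate from the construction.
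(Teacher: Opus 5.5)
Your proof is correct. In a tree, $u$ and $v$ have no common neighbour, so $L(T')=Q^{\top}L(T)Q$ holds. The Rayleigh numerator is nonnegative and $\|Qy\|^2=\|y\|^2+y_w^2\ge\|y\|^2$, so Courant--Fischer gives $\mu_i(T)\le\mu_i(T')$ for $1\le i\le n-1$.

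The paper gives no argument for this lemma; it is imported from \cite{GMS} as a black box. So there is no proof in the text to follow, and your variational argument is a self-contained replacement. The citation keeps the paper short, while your route buys two things.

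First, it shows exactly where acyclicity enters: only in ruling out parallel edges. The same proof works verbatim for contraction in multigraphs if parallel edges are kept. For simple graphs the statement can fail: contracting an edge of $K_3$ gives $K_2$, and $\mu_2(K_3)=3>2=\mu_2(K_2)$.

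Second, it proves Corollary~\ref{cor:contraction} in one step rather than by iterating single-edge contractions. Take the block-constant lift $(Qy)_x=y_j$ for $x\in U_j$. Edges inside a block contribute zero, and two blocks are joined by at most one edge, so $Q^{\top}L(T)Q=L(T_0)$. Moreover $\|Qy\|^2=\sum_j|U_j|\,y_j^2\ge\|y\|^2$. The same min--max comparison then gives $\mu_i(T)\le\mu_i(T_0)$ for every $i\le r$, which yields $m_{T_0}[0,1)\le m_T[0,1)$ directly.
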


\begin{corollary}
\label{cor:contraction}
Let $T$ be a tree, and suppose that $V(T)=U_1\cup U_2\cup\cdots\cup U_r$ is a partition into nonempty connected subsets.
Let $T_0$ be the graph obtained by contracting each $U_i$ to a single vertex.
Then $T_0$ is a tree and $m_T[0,1)\ge m_{T_0}[0,1)$.
\end{corollary}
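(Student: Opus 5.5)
The plan is to derive the corollary from Lemma~\ref{lem:contraction} by induction on the number of edge contractions. The whole quotient is reached by contracting, one at a time, the edges lying inside the classes.

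First, the structural claim. Since each $U_i$ is nonempty and connected, the induced subgraph $T[U_i]$ is a subtree with $|U_i|-1$ edges. The edges of $T$ are of two kinds: those inside some $U_i$, and those joining different classes. Contracting every edge of every $T[U_i]$ collapses each $U_i$ to one vertex and identifies no other vertices, so the result is exactly $T_0$. I will choose an ordering of these $\sum_i(|U_i|-1)=n-r$ internal edges. Contracting an edge of a tree yields a tree: it stays connected, and has one fewer vertex and one fewer edge. No multiple edges or loops arise, because $T$ is acyclic. Hence after each step we still have a tree, and the edge of $T$ chosen next is still an edge inside its (partially contracted) class. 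In particular $T_0$ is a tree, with one vertex per class.

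Next, the eigenvalue count. Write $T=T^{(0)},T^{(1)},\dots,T^{(s)}=T_0$ with $s=n-r$, where $T^{(j+1)}$ is obtained from $T^{(j)}$ by contracting one edge. It suffices to show $m_{T^{(j)}}[0,1)\ge m_{T^{(j+1)}}[0,1)$ for each $j$. Let $N=|V(T^{(j)})|$ and suppose $T^{(j+1)}$ has exactly $k$ eigenvalues below $1$, namely $\mu_1(T^{(j+1)})\le\cdots\le\mu_k(T^{(j+1)})<1$ with $k\le N-1$. By Lemma~\ref{lem:contraction}, $\mu_i(T^{(j)})\le\mu_i(T^{(j+1)})<1$ for $1\le i\le k$. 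So $T^{(j)}$ has at least $k$ eigenvalues below $1$. Chaining these inequalities gives $m_T[0,1)\ge m_{T_0}[0,1)$.

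There is no real obstacle here. The only point needing care is the bookkeeping that the sequence of single-edge contractions really ends at $T_0$ and stays within trees, so that Lemma~\ref{lem:contraction} applies at every step. The trivial case $|U_i|=1$ for all $i$ gives $T_0=T$.
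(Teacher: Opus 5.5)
Your proof is correct and follows essentially the same route as the paper: you realize $T_0$ by contracting the $n-r$ edges inside the classes one at a time, and you apply Lemma~\ref{lem:contraction} at each step. The differences are cosmetic. You get that $T_0$ is a tree because each single-edge contraction preserves being a tree, whereas the paper argues directly that no two blocks are joined by two edges and that any cycle would lift to $T$. You also spell out the index bookkeeping $\mu_i(T^{(j)})\le\mu_i(T^{(j+1)})<1$ for $i\le k\le N-1$, which the paper leaves implicit.
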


\begin{proof}
Since $T$ is connected and each $U_i$ is connected, $T_0$ is connected.
Moreover, two distinct blocks cannot be joined by two different edges of $T$, because those two edges together with the unique paths inside the two blocks would form a cycle.
Similarly, any cycle in $T_0$ would lift to a cycle in $T$.
Hence $T_0$ is a tree.
Each $U_i$ is connected, so it induces a subtree of $T$; contracting the edges of these subtrees one by one produces $T_0$ from $T$.
By Lemma~\ref{lem:contraction}, contracting an edge never decreases any Laplacian eigenvalue, so it cannot increase the number of Laplacian eigenvalues strictly less than $1$.
Therefore $m_{T_0}[0,1)\le m_T[0,1)$.
\end{proof}

\begin{lemma}[{\cite{GXL}}]
\label{lem:spectral-lower}
If $T$ is a tree of diameter $d$, then $m_T[0,1)\ge \lceil(d+1)/3\rceil$.
\end{lemma}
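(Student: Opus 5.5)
We deduce Lemma~\ref{lem:spectral-lower} from Corollary~\ref{cor:contraction} by contracting $T$ onto a path whose spectrum is explicit. Fix a diametral path $P=v_0v_1\cdots v_d$. Removing the edges of $P$ leaves a forest with exactly one component $B_i$ containing $v_i$ for each $i$, namely $v_i$ together with all branches hanging at $v_i$ away from $P$. The sets $B_0,\dots,B_d$ partition $V(T)$ into nonempty connected subsets. Contracting each $B_i$ to one vertex turns $T$ into the path $P_{d+1}$, so Corollary~\ref{cor:contraction} gives
$$m_T[0,1)\ge m_{P_{d+1}}[0,1).$$

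It then remains to count the Laplacian eigenvalues of the path $P_{m}$, with $m=d+1$, that are less than $1$. These eigenvalues are
$$\mu_j=2-2\cos\frac{j\pi}{m}=4\sin^2\frac{j\pi}{2m},\qquad j=0,1,\dots,m-1.$$
The condition $\mu_j<1$ is equivalent to $\sin\frac{j\pi}{2m}<\frac12$, that is, $j<m/3$. The number of integers $j\ge0$ with $j<m/3$ is $\lceil m/3\rceil$. Hence
$$m_{P_{d+1}}[0,1)=\left\lceil\frac{d+1}{3}\right\rceil,$$
and the lemma follows.

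The only point needing care is the count at the boundary. When $3\mid m$, the index $j=m/3$ gives $\mu_j=1$ exactly, which is not strictly less than $1$ and so is correctly excluded. With that checked, the ceiling is right in all residues of $m$ modulo $3$. The rest is a direct application of the contraction corollary.
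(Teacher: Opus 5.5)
Your proof is correct and complete. The paper does not prove this lemma itself (it only cites \cite{GXL}), but your argument is exactly what its own tools give: you contract the components of $T-E(P)$ onto $P_{d+1}$ via Corollary~\ref{cor:contraction}, the same contraction used in the proof of Lemma~\ref{lem:depth-three}, and then apply the path count of Lemma~\ref{lem:path-count}, which is proved directly from the spectrum of $P_n$ and so involves no circularity.
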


A vertex $v$ of a tree $T$ is called a \emph{nonzero vertex for the eigenvalue $1$} if there is a vector $z$ such that $L(T)z=z$ and $z_v\ne0$.

For a graph $G$, let $m_G(1)$ denote the multiplicity of $1$ as a Laplacian eigenvalue of $G$.

\begin{lemma}[{\cite{BLP}}]
\label{lem:joining-eigenvalue-one}
Let $T_1, T_2$ be two trees and $v_1 \in T_1, v_2 \in T_2$. Form a new tree $T$ by joining the vertices $v_1$, $v_2$ by an edge. Then the following hold:
\begin{enumerate}
\item[(1)] If $v_1$ is a nonzero vertex of $T_1$, then
\[
m_T(1) = m_{T_1}(1) + m_{T_2}(1) - 1.
\]
\item[(2)] If $v_1$ is not a nonzero vertex of $T_1$ and $v_2$ is not a nonzero vertex of $T_2$, then
\[
m_{T_1}(1) + m_{T_2}(1) \le m_T(1) \le m_{T_1}(1) + m_{T_2}(1) + 1.
\]
\end{enumerate}
\end{lemma}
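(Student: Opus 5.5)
The plan is to treat $L(T)$ as a rank-one perturbation of the block-diagonal matrix $L_0:=L(T_1)\oplus L(T_2)$ and to compare the $1$-eigenspaces directly. Put $w=e_{v_1}-e_{v_2}$, where $e_v$ is the standard basis vector at $v$. Adding the edge $v_1v_2$ gives
\[
L(T)=L_0+ww^{\top}.
\]
Let $E=E_1\oplus E_2$ be the $1$-eigenspace of $L_0$, where $E_j$ is the $1$-eigenspace of $L(T_j)$. Then $\dim E=m_{T_1}(1)+m_{T_2}(1)$. The key identity is that $L(T)x=x$ holds if and only if
\[
(L_0-I)x=-(x_{v_1}-x_{v_2})\,w=-(w^{\top}x)\,w .
\]
Since $L_0-I$ is symmetric, its range is $E^{\perp}$. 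Everything therefore depends on whether $w\perp E$. For $z=(z^{(1)},z^{(2)})\in E$ we have $w^{\top}z=z^{(1)}_{v_1}-z^{(2)}_{v_2}$. Hence $w\perp E$ exactly when every vector of $E_1$ vanishes at $v_1$ and every vector of $E_2$ vanishes at $v_2$, that is, when neither $v_1$ nor $v_2$ is a nonzero vertex.

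For part (1), suppose $v_1$ is a nonzero vertex of $T_1$, so $w\not\perp E$. Let $x$ be a $1$-eigenvector of $L(T)$. If $w^{\top}x\neq0$, the identity above would give $w\in\operatorname{range}(L_0-I)=E^{\perp}$, a contradiction. So $w^{\top}x=0$, and then $(L_0-I)x=0$, i.e.\ $x\in E$. Conversely, every $x\in E\cap w^{\perp}$ satisfies $L(T)x=L_0x=x$. So the $1$-eigenspace of $L(T)$ equals $E\cap w^{\perp}$. Because $w\not\perp E$, this is a hyperplane of $E$, of dimension $\dim E-1$. This gives $m_T(1)=m_{T_1}(1)+m_{T_2}(1)-1$.

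For part (2), suppose neither $v_1$ nor $v_2$ is a nonzero vertex, so $w\perp E$. Then every $x\in E$ has $w^{\top}x=0$ and satisfies $L(T)x=x$. This gives the lower bound $m_T(1)\ge\dim E$. For the upper bound, consider the linear map $x\mapsto w^{\top}x$ on the $1$-eigenspace $F$ of $L(T)$. Its kernel consists of vectors with $(L_0-I)x=0$, so it is contained in $E$. Its image has dimension at most $1$. Hence $\dim F\le\dim E+1$, which gives the stated two-sided bound. The same upper bound also follows from Weyl-type interlacing for a rank-one positive semidefinite perturbation.

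The only delicate point is translating the combinatorial hypothesis ``nonzero vertex'' into the linear-algebraic condition $w\not\perp E$, and then using $\operatorname{range}(L_0-I)=E^{\perp}$. After that, both parts are immediate dimension counts. Note that part (1) needs only $v_1$ to be a nonzero vertex, with no assumption on $v_2$, since a single nonzero coordinate already forces $w\not\perp E$.
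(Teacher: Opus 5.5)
The paper does not prove this lemma: it is quoted from \cite{BLP} and used only as a black box in the proof of Lemma~\ref{lem:cut-additivity}, so there is no in-paper argument to compare against. Your proof is correct and complete. The following steps are all sound:
the identity $L(T)x=x\iff(L_0-I)x=-(w^{\top}x)\,w$;
the fact that $\operatorname{range}(L_0-I)=E^{\perp}$ for the symmetric matrix $L_0-I$;
the translation of ``nonzero vertex'' into $w\not\perp E$, obtained by testing $w$ against $(z^{(1)},0)$ and $(0,z^{(2)})$, which is legitimate because $E=E_1\oplus E_2$.
The two dimension counts then follow exactly as you say.

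Compared with simply citing \cite{BLP}, your route gains three things. First, nothing in it uses that $T_1,T_2$ are trees or that the eigenvalue is $1$, so it proves the statement verbatim for arbitrary graphs and any eigenvalue $\lambda$ (replace $I$ by $\lambda I$).

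Second, it gives a sharper dichotomy. As soon as \emph{either} endpoint is a nonzero vertex, $m_T(1)=\dim E-1$ and the $1$-eigenspace of $T$ is exactly $E\cap w^{\perp}$. If neither endpoint is nonzero, take $y$ with $(L_0-I)y=w$; then $w^{\top}y$ is well defined modulo $E$. One gets $m_T(1)=\dim E+1$ precisely when $w^{\top}y=-1$, and $m_T(1)=\dim E$ otherwise.

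Third, it is the same rank-one viewpoint $L(T)=L(T^{\ast})+bb^{\top}$ that the paper already uses in Lemma~\ref{lem:cut-additivity}. Adopting it would make that part of the paper self-contained.
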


\begin{lemma}
\label{lem:cut-additivity}
Let $T_1$ and $T_2$ be two trees, and let $T$ be obtained by adding an edge $uv$, where $u\in V(T_1)$ and $v\in V(T_2)$.
If $u$ is a nonzero vertex of $T_1$, then
$$ m_T[0,1)=m_{T_1}[0,1)+m_{T_2}[0,1). $$
\end{lemma}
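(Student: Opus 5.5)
We have to prove Lemma~\ref{lem:cut-additivity}: if $u$ is a nonzero vertex of $T_1$ for the eigenvalue $1$, then $m_T[0,1)=m_{T_1}[0,1)+m_{T_2}[0,1)$. The plan is to compare $L(T)$ with the direct sum $L(T_1)\oplus L(T_2)$ and to count eigenvalues below $1$ through inertia. Write $L(T)=L(T_1)\oplus L(T_2)+E$, where $E=(e_u-e_v)(e_u-e_v)^{\top}$ has rank one and is positive semidefinite. Interlacing for a rank-one positive perturbation gives $\mu_i(T_1\cup T_2)\le\mu_i(T)\le\mu_{i+1}(T_1\cup T_2)$. Hence $m_T[0,1)$ equals either $m_{T_1}[0,1)+m_{T_2}[0,1)$ or that sum minus one. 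So it suffices to rule out a drop by one, and for this we will use the hypothesis on $u$.

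Equivalently, count negative eigenvalues of $M=L(T)-I$ by Sylvester's law of inertia, using the $1$-eigenvector of $T_1$. Let $z$ satisfy $L(T_1)z=z$ with $z_u\neq0$, and set $W=\ker(L(T_1)-I)$. Split $W=W_0\oplus\langle z\rangle$, where $W_0=\{w\in W: w_u=0\}$. Vectors in $W_0$, extended by zero on $T_2$, remain $1$-eigenvectors of $T$. Let $U_1^-$ be the span of the eigenvectors of $L(T_1)$ with eigenvalue $<1$, and define $U_2^-$ similarly for $T_2$.

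The main step is to exhibit a subspace of dimension $n_1^-+n_2^-$ on which the quadratic form $x^{\top}Mx$ is negative definite, where $n_i^-=m_{T_i}[0,1)$. For $x=(a+\varepsilon z)\oplus b$ with $a\in U_1^-$ and $b\in U_2^-$ we have
\[
x^{\top}Mx=a^{\top}(L_1-I)a+b^{\top}(L_2-I)b+\bigl(a_u+\varepsilon z_u-b_v\bigr)^2 .
\]
The cross term with $z$ vanishes because $(L_1-I)z=0$. For each $b$ we can choose $\varepsilon=\varepsilon(a,b)$, linear in $(a,b)$, so that $a_u+\varepsilon z_u-b_v=0$; this is possible because $z_u\ne0$. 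The image of the linear map $(a,b)\mapsto (a+\varepsilon(a,b)z)\oplus b$ has dimension $n_1^-+n_2^-$, and on it the form is strictly negative unless $a=b=0$. Min--max then yields $m_T[0,1)\ge n_1^-+n_2^-$.

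Combining this with the interlacing upper bound from the first paragraph, which rules out any increase, gives equality. The main obstacle is verifying that the modified vectors stay linearly independent and keep the form strictly negative. Independence follows because $z\notin U_1^-$, so the map is injective. Strict negativity holds because the two remaining terms are negative definite on $U_1^-$ and $U_2^-$.
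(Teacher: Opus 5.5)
Your proof is correct, but it takes a different route from the paper. The paper uses rank-one interlacing only to bound how each of the three counts (eigenvalues below $1$, equal to $1$, above $1$) can change. It then imports the Barik--Lal--Pati theorem (Lemma~\ref{lem:joining-eigenvalue-one}(1)), which gives $m_T(1)=m_{T_1}(1)+m_{T_2}(1)-1$ when $u$ is nonzero. If $m_T[0,1)$ also dropped by one, the number of eigenvalues above $1$ would have to rise by two, which contradicts interlacing.

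You instead use the eigenvector $z$ directly. Since $(L_1-I)z=0$, adding $\varepsilon z$ leaves $(a+\varepsilon z)^{\top}(L_1-I)(a+\varepsilon z)$ unchanged. Choosing $\varepsilon$ to kill the bridge term $(x_u-x_v)^2$ therefore produces an $(n_1^-+n_2^-)$-dimensional subspace on which $L(T)-I$ is negative definite, and Courant--Fischer gives the lower bound.

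Your argument is self-contained and avoids the Barik--Lal--Pati theorem entirely. It never uses acyclicity, so it proves the additivity for arbitrary graphs joined by a bridge. It also works with $1$ replaced by any $\lambda$, provided $u$ is nonzero in some $\lambda$-eigenvector of $L(T_1)$. The paper's route is shorter given that black box, and it additionally records how $m(1)$ and the number of eigenvalues above $1$ change, which your argument does not need or provide.

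The decomposition $W=W_0\oplus\langle z\rangle$ and the remark that vectors of $W_0$ survive as eigenvectors of $T$ are never used and can be dropped.
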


\begin{proof}
Let $T^\ast=T_1\cup T_2$.
Since $u$ is a nonzero vertex of $T_1$, Lemma~\ref{lem:joining-eigenvalue-one} gives
$$ m_T(1)=m_{T_1}(1)+m_{T_2}(1)-1=m_{T^\ast}(1)-1. $$

Now $T^\ast$ has exactly $|V(T^\ast)|-m_{T^\ast}[0,1)-m_{T^\ast}(1)$ Laplacian eigenvalues greater than $1$.
If $b=e_u-e_v$ is the incidence vector of the added edge, then
$$ L(T)=L(T^\ast)+bb^{\top}. $$
The rank-one positive semidefinite interlacing inequalities imply
$$ m_{T^\ast}[0,1)-1\le m_T[0,1)\le m_{T^\ast}[0,1), $$
and they also imply that $T$ has at most $|V(T^\ast)|-m_{T^\ast}[0,1)-m_{T^\ast}(1)+1$ Laplacian eigenvalues greater than $1$.
If $m_T[0,1)=m_{T^\ast}[0,1)-1$, then $m_T(1)=m_{T^\ast}(1)-1$ would force $T$ to have
$$ |V(T)|-\bigl(m_{T^\ast}[0,1)-1\bigr)-\bigl(m_{T^\ast}(1)-1\bigr)=|V(T^\ast)|-m_{T^\ast}[0,1)-m_{T^\ast}(1)+2 $$
Laplacian eigenvalues greater than $1$, a contradiction.
Hence $m_T[0,1)=m_{T^\ast}[0,1)$.
Finally, the Laplacian spectrum of the disjoint union $T^\ast=T_1\cup T_2$ is the multiset union of the spectra of $T_1$ and $T_2$, and thus
$$ m_{T^\ast}[0,1)=m_{T_1}[0,1)+m_{T_2}[0,1). $$
Therefore $m_T[0,1)=m_{T_1}[0,1)+m_{T_2}[0,1)$.
This proves the result.
\end{proof}

The next lemma restricts the distance of vertices from a diametral path.

\begin{lemma}
\label{lem:depth-three}
Let $T$ be a tree of diameter $d$, and let $P=v_0v_1\cdots v_d$ be a diametral path.
If some vertex of $T$ has distance at least $3$ from $P$, then $m_T[0,1) \ge \lceil(d+1)/3\rceil+1$.
\end{lemma}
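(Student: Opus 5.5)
The plan is to reduce $T$ by contraction to a small explicit tree and then compute that tree's count using Lemma~\ref{lem:cut-additivity}.

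\textbf{Reduction.} Since distances from $P$ change by at most one along edges, if some vertex has distance at least $3$ from $P$ then some vertex $w$ has distance exactly $3$. Let $v_j a b w$ be the shortest path from $w$ to $P$; here $v_j\in V(P)$ and $a,b\notin V(P)$. Set $S=V(P)\cup\{a,b,w\}$. Then $T[S]$ is the path $P$ with the pendant path $abw$ joined to $v_j$ by the edge $v_ja$. Every component $C$ of $T-S$ is joined to $S$ by exactly one edge, since $T$ is a tree. Assign each such $C$ to the vertex $s\in S$ it is attached to, and put
$$U_s=\{s\}\cup\bigcup\{C : C \text{ is attached to } s\}.$$
These sets are connected and partition $V(T)$. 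Contracting each $U_s$ yields the tree $T_0=T[S]$. By Corollary~\ref{cor:contraction},
$$m_T[0,1)\ge m_{T_0}[0,1).$$

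\textbf{Computing $m_{T_0}[0,1)$.} The tree $T_0$ is obtained from $T_1=P_3$ (the path $abw$) and $T_2=P\cong P_{d+1}$ by adding the edge $av_j$.

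\begin{itemize}
\item The Laplacian spectrum of $P_3$ is $\{0,1,3\}$. The eigenvalue $1$ has eigenvector $(1,0,-1)$ on $(a,b,w)$, so $a$ is a nonzero vertex of $T_1$. Also $m_{P_3}[0,1)=1$.
\item Lemma~\ref{lem:cut-additivity} then gives $m_{T_0}[0,1)=m_{P_{d+1}}[0,1)+1$.
\item The Laplacian eigenvalues of $P_n$ are $2-2\cos(\pi k/n)$ for $k=0,\dots,n-1$. Such an eigenvalue is less than $1$ exactly when $\cos(\pi k/n)>1/2$, i.e.\ when $k<n/3$. Hence $m_{P_n}[0,1)=\lceil n/3\rceil$, and with $n=d+1$ this is $\lceil (d+1)/3\rceil$. (Alternatively, Lemma~\ref{lem:spectral-lower} gives the lower bound $m_{P_{d+1}}[0,1)\ge\lceil(d+1)/3\rceil$, which is all the conclusion needs.)
\end{itemize}

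Combining these, $m_T[0,1)\ge m_{T_0}[0,1)\ge\lceil(d+1)/3\rceil+1$, as claimed.

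\textbf{Main obstacle.} The only delicate point is checking that the hypothesis of Lemma~\ref{lem:cut-additivity} holds on the correct side. We need the endpoint of the pendant $P_3$ at which it is glued to be a nonzero vertex for the eigenvalue $1$. This holds because the $1$-eigenvector of $P_3$ vanishes only at the middle vertex. This is exactly why distance $3$ from $P$, rather than $2$, is needed. At distance $2$ the pendant piece would be $P_2$, whose spectrum $\{0,2\}$ does not contain $1$, so the additivity argument would not apply.
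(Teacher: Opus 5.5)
Your proposal is correct and follows essentially the same route as the paper: you contract $T$ to $P_{d+1}$ with a pendant $P_3$ at $v_j$, apply Lemma~\ref{lem:cut-additivity} using that the attaching end of $P_3$ is a nonzero vertex for the eigenvalue $1$, and then bound $m_{P_{d+1}}[0,1)$ below by $\lceil(d+1)/3\rceil$. The differences are cosmetic: you choose a vertex at distance exactly $3$, you describe the partition through the components of $T-S$ rather than by deleting edges, and you compute the path count explicitly.
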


\begin{proof}
Choose a vertex $t$ whose distance to $P$ is at least $3$, and let $v_j$ be the vertex of $P$ closest to $t$.
Let $v_j-w_1-w_2-w_3$ be the first three edges of the unique path from $v_j$ to $t$, where $w_1,w_2,w_3\notin V(P)$.

Delete the edges of $P$ and also delete $v_jw_1$, $w_1w_2$, and $w_2w_3$.
Let $U_i$ be the component containing $v_i$, and let $W_1,W_2,W_3$ be the components containing $w_1,w_2,w_3$, respectively.
These $d+4$ sets are precisely the components of the resulting forest.
Contracting all these components gives a tree $T^*$ obtained from $P_{d+1}$ by attaching a pendant path of length $3$ at $v_j$.
By Corollary~\ref{cor:contraction}, $m_T[0,1)\ge m_{T^*}[0,1)$.

Let $P_3=w_1w_2w_3$.
The vector $z$ defined by
$$ z(w_1)=1,\qquad z(w_2)=0,\qquad z(w_3)=-1 $$
satisfies $L(P_3)z=z$.
Hence $w_1$ is a nonzero vertex of $P_3$ for the eigenvalue $1$.
The Laplacian spectrum of $P_3$ is $\{0,1,3\}$, and therefore $m_{P_3}[0,1)=1$.

The tree $T^*$ is obtained from the vertex-disjoint trees $P_3$ and $P_{d+1}$ by adding the edge $w_1v_j$.
Lemma~\ref{lem:cut-additivity} gives
$$ m_{T^*}[0,1)=m_{P_3}[0,1)+m_{P_{d+1}}[0,1)=m_{P_{d+1}}[0,1)+1. $$
Since $P_{d+1}$ has diameter $d$, Lemma~\ref{lem:spectral-lower} yields $m_{P_{d+1}}[0,1)\ge \lceil(d+1)/3\rceil$.
Therefore
$$ m_T[0,1)\ge m_{T^*}[0,1)=m_{P_{d+1}}[0,1)+1 \ge \left\lceil\frac{d+1}{3}\right\rceil+1,$$
as required.
\end{proof}

The same argument proves the following stronger fact.
Let $T$ be any tree, let $v\in V(T)$, and let $P_3=w_1w_2w_3$ be vertex-disjoint from $T$.
If $H$ is obtained from $T\cup P_3$ by adding the edge $vw_1$, then
$$ m_H[0,1)=m_T[0,1)+1. $$

\begin{corollary}
\label{cor:distance-two}
If $T$ is a tree of diameter $d$ satisfying $m_T[0,1)=\lceil(d+1)/3\rceil$, then $T$ has a diametral path $P$ such that every vertex of $T$ has distance at most $2$ from $P$.
\end{corollary}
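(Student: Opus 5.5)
This is the contrapositive of Lemma~\ref{lem:depth-three}, applied to an arbitrary diametral path. Let $T$ be a tree of diameter $d$ with $m_T[0,1)=\lceil(d+1)/3\rceil$. Since $T$ is finite and $d=\max_{u,v}d_T(u,v)$ is attained, $T$ has at least one diametral path. Fix any such path $P=v_0v_1\cdots v_d$.

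Suppose some vertex of $T$ had distance at least $3$ from $P$. Lemma~\ref{lem:depth-three} applies to $T$ and this particular $P$, and gives $m_T[0,1)\ge\lceil(d+1)/3\rceil+1$. This contradicts the hypothesis. Hence every vertex of $T$ lies within distance $2$ of $P$, and $P$ is the required diametral path.

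In fact the argument proves more than the statement asks: every diametral path of $T$ has this property, not just one. There is no real obstacle here. The only point to check is that Lemma~\ref{lem:depth-three} holds for an arbitrary diametral path, which it does, since its proof uses only that $P$ is a diametral path (so that $P_{d+1}$ has diameter $d$).
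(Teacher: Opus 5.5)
Your proposal is correct and is exactly the paper's argument. You fix an arbitrary diametral path $P$ and observe that a vertex at distance at least $3$ from $P$ would, by Lemma~\ref{lem:depth-three}, force $m_T[0,1)\ge\lceil(d+1)/3\rceil+1$. Your side remark that every diametral path then has this property is also correct, since the paper's proof likewise starts from an arbitrary diametral path.
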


\begin{proof}
Let $P$ be a diametral path.
If some vertex had distance at least $3$ from $P$, Lemma~\ref{lem:depth-three} would give $m_T[0,1) \ge \lceil(d+1)/3\rceil+1$, contrary to equality.
\end{proof}

\section{Main lemmas}\label{sec:3}

Throughout this section, let $P=v_0v_1\cdots v_d$ be a diametral path of a tree $T$.
In the applications below, every vertex of $T$ has distance at most $2$ from $P$, by Corollary~\ref{cor:distance-two}.
Consequently, every component of $T-V(P)$ is either a single pendant vertex adjacent to a vertex of $P$, or a star whose center is adjacent to a vertex of $P$.

We first record the monotonicity needed to pass from a forbidden subtree to the original tree.

\begin{lemma}[{\cite{GXL}}]
\label{lem:pendant-monotonicity}
Let $G'$ be obtained from a graph $G$ by deleting a pendant vertex.
Then $m_G[0,1)\ge m_{G'}[0,1)$.
\end{lemma}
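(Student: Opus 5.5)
This is Lemma~\ref{lem:pendant-monotonicity}: deleting a pendant vertex $w$ from $G$ to obtain $G'$ cannot increase the number of Laplacian eigenvalues below $1$. The plan is to compare $L(G)$ with $L(G')\oplus(0)$, the latter acting on the vertex set of $G$ with $w$ as an isolated vertex. Let $u$ be the unique neighbour of $w$, and let $b=e_u-e_w$. Then
$$ L(G)=\bigl(L(G')\oplus(0)\bigr)+bb^{\top}. $$
Write $H$ for the graph $G'$ plus the isolated vertex $w$, so that $L(H)=L(G')\oplus(0)$.

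The spectrum of $L(H)$ is that of $L(G')$ together with one extra $0$. Hence
$$ m_H[0,1)=m_{G'}[0,1)+1. $$
Adding the rank-one positive semidefinite matrix $bb^{\top}$ shifts eigenvalues upward by interlacing, $\mu_i(H)\le\mu_i(G)\le\mu_{i+1}(H)$. Consequently, the number of eigenvalues below $1$ drops by at most one:
$$ m_G[0,1)\ge m_H[0,1)-1=m_{G'}[0,1). $$
This is exactly the inequality in Lemma~\ref{lem:pendant-monotonicity}. It is the same rank-one interlacing already used in the proof of Lemma~\ref{lem:cut-additivity}, so no new tool is needed.

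The only point to check is the direction of interlacing, namely that eigenvalues can only increase, each by at most one position. Let $k=m_H[0,1)$. Then $\mu_{k-1}(G)\le\mu_k(H)<1$, which gives at least $k-1$ eigenvalues of $G$ below $1$. This is the step where care is needed, but it is a standard Weyl/Cauchy-type statement. An alternative route is Cauchy interlacing for the principal submatrix of $L(G)$ obtained by deleting row and column $w$, which equals $L(G')+e_ue_u^{\top}\succeq L(G')$. Combining that interlacing with monotonicity under PSD perturbation gives the same conclusion.
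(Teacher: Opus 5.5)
The paper gives no proof of this lemma; it is quoted from \cite{GXL}, so there is no in-paper argument to compare against. Your main argument is correct and complete. You write $L(G)=L(H)+bb^{\top}$ with $b=e_u-e_w$ and $H=G'$ plus the isolated vertex $w$. The isolated vertex contributes one extra eigenvalue $0$. The upper half of rank-one interlacing then gives $\mu_i(G)\le\mu_{i+1}(H)=\mu_i(G')$ for $1\le i\le n-1$. This index-wise bound is slightly stronger than the counting inequality, and it is the same interlacing the paper uses in Lemma~\ref{lem:cut-additivity}.

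The paper only applies the lemma to trees, in Corollaries~\ref{cor:d-one} and \ref{cor:d-zero}. For trees there is a shorter route using the paper's own tools: deleting a leaf $w$ gives the same tree as contracting its edge $uw$. So Corollary~\ref{cor:contraction}, with the block $\{u,w\}$ and all other blocks singletons, gives the result at once. Your interlacing argument has the advantage of covering arbitrary graphs, which is how the statement is written.

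Your closing ``alternative route'' does not work as stated and should be dropped. Let $B=L(G')+e_ue_u^{\top}$ be the principal submatrix of $L(G)$. Cauchy interlacing gives $\mu_i(G)\le\lambda_i(B)$, and $B\succeq L(G')$ gives $\mu_i(G')\le\lambda_i(B)$. Both are lower bounds on $\lambda_i(B)$, so together they say nothing about $\mu_i(G)$ versus $\mu_i(G')$. The best this route yields, using the other half of rank-one interlacing $\lambda_i(B)\le\mu_{i+1}(G')$, is $\mu_i(G)\le\mu_{i+1}(G')$. That is only $m_G[0,1)\ge m_{G'}[0,1)-1$. The missing unit is exactly the extra zero eigenvalue from the isolated vertex, which your main argument uses.
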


The next facts identify the pieces that occur after the cut edges are deleted.

\begin{lemma}
\label{lem:path-count}
For every $n\ge1$,
$$ m_{P_n}[0,1)=\left\lceil\frac{n}{3}\right\rceil. $$
If $n\equiv0\pmod3$, both end vertices of $P_n$ are nonzero vertices for the eigenvalue $1$.
\end{lemma}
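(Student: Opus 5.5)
The plan is to compute the Laplacian spectrum of $P_n$ explicitly. It is classical that
$$\mu_j(P_n)=2-2\cos\frac{\pi (j-1)}{n}=4\sin^2\frac{\pi(j-1)}{2n},\qquad j=1,\ldots,n,$$
with eigenvector entries $x_j(i)=\cos\bigl(\pi (j-1)(2i-1)/(2n)\bigr)$ for $i=1,\ldots,n$. I would verify this directly: the interior rows of $L(P_n)x=\mu x$ reduce to the cosine addition formula. The end rows hold because the half-integer phase makes the sequence $x(0)=x(1)$ and $x(n+1)=x(n)$ extend symmetrically, which is exactly the Neumann-type boundary condition of the Laplacian.

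For the count, $\mu_j<1$ is equivalent to $\cos(\pi(j-1)/n)>1/2$, that is, $(j-1)/n<1/3$, or $j-1<n/3$. The number of integers $j-1\in\{0,\ldots,n-1\}$ with $j-1<n/3$ is $\lceil n/3\rceil$: if $3\mid n$ these are $0,\ldots,n/3-1$, and otherwise they are $0,\ldots,\lfloor n/3\rfloor$. In both cases the count is $\lceil n/3\rceil$. This proves $m_{P_n}[0,1)=\lceil n/3\rceil$. It also shows that the eigenvalue $1$ occurs exactly when $3\mid n$, at $j-1=n/3$, and with multiplicity one, since the $\mu_j$ are distinct.

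For the second assertion, let $n=3m$. The eigenvalue $1$ is simple with eigenvector $z(i)=\cos\bigl(\pi(2i-1)/6\bigr)$. This gives $z(1)=\cos(\pi/6)=\sqrt3/2\neq0$ and $z(n)=\cos\bigl(\pi(6m-1)/6\bigr)=\cos(m\pi-\pi/6)=\pm\sqrt3/2\neq0$. Hence both end vertices are nonzero vertices for the eigenvalue $1$.

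There is also a cleaner combinatorial alternative: up to scaling, the eigenvector is the periodic pattern $1,0,-1,-1,0,1,1,0,-1,\ldots$. One checks directly that it satisfies $L z=z$ with $z(1)=1$ and $z(n)=\pm1$ when $3\mid n$; this generalizes the $P_3$ vector used in Lemma~\ref{lem:depth-three}.

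The only mildly delicate point is the boundary verification of the eigenvectors and the ceiling count at the threshold $j-1=n/3$. Neither is a real obstacle.
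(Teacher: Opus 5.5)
Your proof is correct and follows the paper's argument: the count comes from the explicit spectrum $2-2\cos(j\pi/n)$ with the threshold $j<n/3$. Your cosine eigenvector $z(i)=\cos(\pi(2i-1)/6)$ is just $\frac{\sqrt3}{2}$ times the periodic vector $1,0,-1,-1,0,1,\ldots$ that the paper builds by propagating $z(v_{i+1})=z(v_i)-z(v_{i-1})$, which is exactly your combinatorial alternative.
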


\begin{proof}
The Laplacian eigenvalues of $P_n$ are $2-2\cos(j\pi/n)$, where $j=0,1,\ldots,n-1$.
Such an eigenvalue is less than $1$ precisely when $j<n/3$, which gives the stated count.

Now assume $n\equiv0\pmod3$ and write the path as $v_0v_1\cdots v_{n-1}$.
Starting at $v_0$, assign the values $1,0$ and propagate by $z(v_{i+1})=z(v_i)-z(v_{i-1})$.
The values repeat with pattern $1,0,-1,-1,0,1$.
Since $n\equiv0\pmod3$, the equation at the opposite end is also satisfied.
Thus $L(P_n)z=z$ and $z(v_0)\ne0$.
The other end follows by symmetry.
\end{proof}

\begin{lemma}
\label{lem:caterpillar-equality}
Let $T$ be obtained from a diametral path $P$ of length $d$ by adding pendant vertices only at vertices belonging to a minimum dominating set of $P$.
Then
$$ m_T[0,1)=\left\lceil\frac{d+1}{3}\right\rceil. $$
\end{lemma}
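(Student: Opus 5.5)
The plan is to squeeze $m_T[0,1)$ between two matching bounds. The lower bound is Lemma~\ref{lem:spectral-lower}. For the upper bound we use Theorem~\ref{thm:HJT}: it suffices to show $\gamma(T)\le\lceil(d+1)/3\rceil$. The hypothesis already names a candidate dominating set, so the whole proof reduces to a domination check.

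First fix notation. Let $P=v_0v_1\cdots v_d$, so $P\cong P_{d+1}$. The domination number of a path on $d+1$ vertices is $\lceil(d+1)/3\rceil$, and this is the standard count behind Lemma~\ref{lem:dom-lower}. Let $D\subseteq V(P)$ be the minimum dominating set of $P$ from the hypothesis, so $|D|=\lceil(d+1)/3\rceil$. Every vertex of $T$ outside $P$ is, by construction, a pendant vertex adjacent to some $v_i\in D$.

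Next verify that $D$ dominates $T$.
\begin{itemize}
\item A vertex of $P$ not in $D$ has a neighbour in $D$ along $P$, because $D$ dominates $P$.
\item A new pendant vertex is adjacent to its attachment vertex, which lies in $D$.
\end{itemize}
Hence $\gamma(T)\le|D|=\lceil(d+1)/3\rceil$, and Theorem~\ref{thm:HJT} gives $m_T[0,1)\le\lceil(d+1)/3\rceil$.

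For the lower bound we need the diameter of $T$ to be $d$. The statement calls $P$ a diametral path, which already means $\diam(T)=d$. Even without that wording, the diameter cannot grow past $d$, but could it drop? It cannot, since $P\subseteq T$ forces $\diam(T)\ge d$. For the upper side, note that attaching a pendant vertex at an end of $P$ could lengthen the diameter. So one checks that the pendants go only to internal vertices of $P$, or else accepts $\diam(T)=d$ as given by the word ``diametral''. Then Lemma~\ref{lem:spectral-lower} gives $m_T[0,1)\ge\lceil(d+1)/3\rceil$, and combining the two bounds proves the equality.

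The only step needing care is this diameter bookkeeping, namely confirming that $\diam(T)=d$ so that the ceiling in Lemma~\ref{lem:spectral-lower} is the same one as in the domination bound. Everything else is immediate from Theorem~\ref{thm:HJT} and the value of $\gamma(P_{d+1})$.
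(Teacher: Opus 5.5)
Your proof is correct and is the same as the paper's: the given minimum dominating set $D$ of $P$ also dominates $T$, so Theorem~\ref{thm:HJT} gives $m_T[0,1)\le|D|=\lceil(d+1)/3\rceil$, and Lemma~\ref{lem:spectral-lower} supplies the matching lower bound. The diameter check you single out is even easier than you suggest: the lower bound needs only $\diam(T)\ge d$, which holds because $P\subseteq T$, together with the monotonicity of the ceiling.
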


\begin{proof}
The specified minimum dominating set of $P$ also dominates $T$.
Theorem~\ref{thm:HJT}, Lemma~\ref{lem:dom-lower} and Lemma~\ref{lem:spectral-lower} give
$$ \left\lceil\frac{d+1}{3}\right\rceil\le m_T[0,1)\le\gamma(T)\le\left\lceil\frac{d+1}{3}\right\rceil. $$
\end{proof}

\begin{lemma}
\label{lem:nonzero-tail}
Let $T$ be obtained from a path $P=v_0v_1\cdots v_d$ by adding pendant vertices only to vertices of $P$.
Suppose that $v_p$ is the vertex of $P$ with the largest index having a pendant neighbour outside $V(P)$.
If $d-p\equiv1\pmod3$, then $v_d$ is a nonzero vertex of $T$.
Symmetrically, if $v_p$ is the vertex of $P$ with the smallest index having a pendant neighbour outside $V(P)$ and $p\equiv1\pmod3$, then $v_0$ is a nonzero vertex of $T$.
\end{lemma}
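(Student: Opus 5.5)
The plan is to construct an explicit eigenvector $z$ of $L(T)$ for the eigenvalue $1$ with $z(v_d)\ne0$, supported on the tail $v_p,v_{p+1},\ldots,v_d$ and vanishing everywhere else. Write $d-p=3s+1$ with $s\ge0$. The tail $v_{p+1}\cdots v_d$ has $d-p=3s+1$ vertices, which is not divisible by $3$, so we cannot simply reuse Lemma~\ref{lem:path-count} on it. Instead we include $v_p$ with value $0$ and one of its pendant neighbours, $w$.

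First, define the vector on the path from the end. Put
\[
z(v_d)=1,\qquad z(v_{d-1})=1,
\]
which is forced by the equation at the leaf $v_d$: $(1-1)z(v_d)=-z(v_{d-1})+\ldots$, more precisely $z(v_d)-z(v_{d-1})=z(v_d)$ gives $z(v_{d-1})=0$. Let us redo this carefully. The eigen-equation at a vertex $x$ of degree $\deg x$ reads
\[
(\deg x-1)z(x)=\sum_{y\sim x}z(y).
\]
At the leaf $v_d$ this gives $z(v_{d-1})=0$. We then propagate along the path using the degree-$2$ recurrence $z(v_{i-1})=z(v_i)-z(v_{i+1})$ for $p<i<d$. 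These internal vertices have no pendant neighbours by the maximality of $p$, so they really do have degree $2$. Starting from $v_d$, the values follow the period-$6$ pattern $1,0,-1,-1,0,1,\ldots$, which is the same as in Lemma~\ref{lem:path-count}. Since $d-p\equiv1\pmod 3$, the value reached at $v_p$ has index offset $3s+1$ from $v_d$, and therefore $z(v_p)=0$. The value at $v_{p+1}$ is then $\pm1$, because the pattern has zeros exactly at offsets $\equiv1\pmod3$.

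Next, handle the other vertices. Set $z(w)=-z(v_{p+1})$ for one chosen pendant neighbour $w$ of $v_p$, and set $z=0$ on every other vertex of $T$. We then check the eigen-equations. Vertices strictly inside the tail satisfy them by construction. At $w$, which has degree $1$, the equation is $0\cdot z(w)=z(v_p)=0$, and this holds. At $v_p$, the equation is $(\deg v_p-1)\cdot0=z(v_{p+1})+z(w)+(\text{zeros})=0$, which holds by the choice of $z(w)$. Every other vertex $x$ has $z(x)=0$, and its neighbours with nonzero value are only among $v_{p+1},\ldots,v_d$ and $w$. The neighbours of those vertices are either in the tail or equal to $v_p$, so the equation at $x$ reads $0=0$. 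Hence $L(T)z=z$ with $z(v_d)=1\ne0$.

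The main obstacle is only index bookkeeping: confirming that $z(v_p)=0$ exactly when $d-p\equiv1\pmod3$, and that $v_{p+1},\ldots,v_{d-1}$ indeed have degree $2$. We would verify the first point by listing the pattern by offset $j=d-i$: the value is $1,0,-1,-1,0,1$ for $j\equiv0,\ldots,5\pmod 6$, so it is zero exactly when $j\equiv1,4\pmod6$, that is, when $j\equiv1\pmod3$. The case $p=d-1$ (so $s=0$) is degenerate but cannot occur, because a pendant neighbour of $v_{d-1}$ outside $P$ is still allowed, and the same construction works there with $z(v_d)=1$ and $z(w)=-1$. The symmetric statement follows by reversing the labelling of $P$.
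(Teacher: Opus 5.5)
Your proof is correct and follows the paper's argument exactly. You set $z(v_d)=1$, $z(v_{d-1})=0$, propagate leftwards through the degree-$2$ vertices with the period-$6$ pattern so that $d-p\equiv1\pmod 3$ forces $z(v_p)=0$, then set $z(w)=-z(v_{p+1})$ for one pendant neighbour $w$ of $v_p$ and $z=0$ elsewhere; your vertex-by-vertex check spells out the paper's ``direct substitution.''

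Only the exposition needs tidying. Delete the false start $z(v_{d-1})=1$. Also, the case $p=d-1$ certainly can occur, so do not say it cannot; it needs no separate treatment, because the recurrence range is then empty and the leaf equation at $v_d$ already gives $z(v_p)=0$.
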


\begin{proof}
We prove the first assertion.
Choose a pendant neighbour $w\in V(T)\setminus V(P)$ of $v_p$.
Set $z(v_d)=1$ and $z(v_{d-1})=0$, and propagate from right to left by $z(v_{i-1})=z(v_i)-z(v_{i+1})$ until $v_p$ is reached.
The values repeat with pattern $1,0,-1,-1,0,1$, so $d-p\equiv1\pmod3$ gives $z(v_p)=0$.
Set all coordinates to the left of $v_p$ equal to $0$, put $z(w)=-z(v_{p+1})$, and assign value $0$ to every other pendant vertex outside $V(P)$.
A direct substitution gives $L(T)z=z$ at every vertex, and $z(v_d)=1$.
The second assertion follows by reversing the path.
\end{proof}

\begin{lemma}
\label{lem:wrong-leaf}
Let $d\equiv2\pmod3$, and let $T$ be obtained from a diametral path $P=v_0v_1\cdots v_d$ by adding one pendant vertex at $v_p$, where $1\le p\le d-1$ and $p\not\equiv1\pmod3$.
Then
$$ m_T[0,1)=\frac{d+1}{3}+1. $$
\end{lemma}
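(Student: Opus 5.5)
Write $d=3k+2$, so $(d+1)/3=k+1$. The plan is to sandwich $m_T[0,1)$ between $k+2$ from above and from below.

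\textbf{Upper bound.} We have $\gamma(P_{d+1})=k+1$, and Theorem~\ref{thm:HJT} gives $m_T[0,1)\le\gamma(T)$. Adding the pendant vertex $w$ at $v_p$ raises the domination number by at most one, since $D\cup\{w\}$ dominates $T$ for any minimum dominating set $D$ of $P$. Hence $m_T[0,1)\le\gamma(T)\le k+2$.

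\textbf{Lower bound.} Lemma~\ref{lem:pendant-monotonicity} and Lemma~\ref{lem:spectral-lower} only give $m_T[0,1)\ge k+1$. To gain the extra one, I would cut $T$ at an edge $v_{p}v_{p+1}$ or $v_{p-1}v_p$ and use Lemma~\ref{lem:cut-additivity}. There are two cases, according to $p\equiv0$ or $p\equiv2\pmod 3$.

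\emph{Case $p\equiv0\pmod3$.} Delete the edge $v_{p-1}v_p$. This leaves the left piece $P_p=v_0\cdots v_{p-1}$, which has $p\equiv0\pmod3$ vertices. By Lemma~\ref{lem:path-count}, its end $v_{p-1}$ is a nonzero vertex and $m_{P_p}[0,1)=p/3$. The right piece $T_2$ is the path $v_p\cdots v_d$ with $d-p+1\equiv0\pmod3$ vertices, together with the pendant $w$ at its first vertex. Lemma~\ref{lem:cut-additivity} gives $m_T[0,1)=p/3+m_{T_2}[0,1)$. Now $T_2$ contains a path $w v_p\cdots v_d$ on $d-p+2$ vertices. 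Using Lemma~\ref{lem:pendant-monotonicity} together with Lemma~\ref{lem:path-count}, or directly since $T_2$ is exactly that path, $m_{T_2}[0,1)=\lceil(d-p+2)/3\rceil=(d-p+1)/3+1$. Therefore
\[
m_T[0,1)=\frac{p}{3}+\frac{d-p+1}{3}+1=k+2 .
\]

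\emph{Case $p\equiv2\pmod3$.} Then $d-p\equiv0\pmod3$, and the mirror argument applies. Delete $v_pv_{p+1}$, keeping the right path $v_{p+1}\cdots v_d$, which has $d-p\equiv0\pmod 3$ vertices. Its end $v_{p+1}$ is nonzero by Lemma~\ref{lem:path-count}. The left piece is the path $v_0\cdots v_p w$ on $p+2$ vertices, and $\lceil(p+2)/3\rceil=(p+1)/3+1$. Summing,
\[
m_T[0,1)=\frac{p+1}{3}+1+\frac{d-p}{3}=k+2 .
\]

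The main point to check is the application of Lemma~\ref{lem:cut-additivity}. The nonzero vertex must lie on the pure path side, with $u$ the endpoint of a path whose order is divisible by $3$. In both cases the chosen cut produces exactly such a piece, so the step goes through. The only remaining detail is the congruence bookkeeping: $p\equiv0$ makes $p$ divisible by $3$, and $p\equiv2$ makes $d-p$ divisible by $3$.
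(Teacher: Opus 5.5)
Your proposal is correct and is essentially the paper's proof. You cut the same edges, $v_{p-1}v_p$ when $p\equiv0\pmod3$ and $v_pv_{p+1}$ when $p\equiv2\pmod3$, and apply Lemma~\ref{lem:cut-additivity} with the nonzero vertex at the end of the pure path whose order is divisible by $3$. The domination upper bound in your first paragraph is harmless but redundant, since the cut computation already gives the exact value $k+2$.
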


\begin{proof}
Suppose first that $p\equiv0\pmod3$.
Delete the edge $v_{p-1}v_p$.
The component containing $v_0$ is a path of diameter $p-1\equiv2\pmod3$, and the component containing $v_d$ is a path of diameter $d-p+1\equiv0\pmod3$, with the added pendant vertex as one end.
The joining end of the component containing $v_0$ is nonzero by Lemma~\ref{lem:path-count}.
Lemmas~\ref{lem:cut-additivity} and \ref{lem:path-count} give
$$ m_T[0,1)=\frac{p}{3}+\left(\frac{d-p+1}{3}+1\right)=\frac{d+1}{3}+1. $$

If $p\equiv2\pmod3$, delete $v_pv_{p+1}$ and apply the same argument from the other end.
\end{proof}

\begin{figure}[H]
\centering
\includegraphics[width=0.55\textwidth]{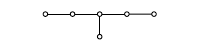}
\caption{The six-vertex tree $T_6$.}
\label{fig:T6}
\end{figure}

Let $T_6$ denote the six-vertex tree shown in Figure~\ref{fig:T6}, whose unique vertex of degree three has incident paths of lengths $1,2,2$.
A direct calculation gives
$$ \det(\lambda I-L(T_6))=\lambda(\lambda-2)(\lambda^2-3\lambda+1)(\lambda^2-5\lambda+3), $$
and consequently
$m_{T_6}[0,1)=3$.

\subsection{The case \texorpdfstring{$d\equiv1\pmod3$}{d congruent to 1 modulo 3}}

Let $d=3k+1$, where $k\ge1$.
For $1\le s\le k$, put

$$ V_s=\{v_1,v_4,\ldots,v_{3s-2}\}\cup\{v_{3s},v_{3s+3},\ldots,v_{3k}\}. $$
The sets $V_s$ are precisely the sets of internal vertices of $P$ at which pendant vertices may be added.
Their consecutive gaps are $3$, except for one gap of length $2$.
Each $V_s$ has $k+1$ vertices and dominates $P$; hence $V_s$ is a minimum dominating set of $P$.

We consider the following four structures, each of which contains the whole path $P$:
\begin{description}
\item[\textup{(F1)}] One pendant vertex is attached to $v_{3a-1}$, where $1\le a\le k$.
\item[\textup{(F2)}] One pendant vertex is attached to each of $v_{3a}$ and $v_{3b+1}$, where $1\le a\le b\le k-1$.
\item[\textup{(F3)}] A pendant path of length two is attached to $v_{3a+1}$, where $1\le a\le k-1$.
\item[\textup{(F4)}] A pendant path of length two is attached to $v_{3a}$, where $1\le a\le k-1$.
\end{description}

\begin{lemma}
\label{lem:F-structure}
Let $T$ have diameter $3k+1$, and suppose every vertex of $T$ has distance at most $2$ from $P$.
Then $T$ is a caterpillar whose pendant vertices are attached only at vertices of some $V_s$ if and only if $T$ contains none of \textup{(F1)--(F4)} as a subtree containing $P$.
\end{lemma}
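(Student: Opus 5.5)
The plan is to prove both directions by direct combinatorial inspection of where vertices off $P$ can sit. No spectral input is needed. Throughout, we use that $P$ is diametral. Hence no vertex outside $P$ is adjacent to $v_0$ or $v_d$. Moreover, a vertex at distance $2$ from $P$ cannot hang off $v_1$ or $v_{d-1}$, since otherwise a path longer than $d$ would appear. Consequently, every attachment point of an off-path vertex is some $v_j$ with $1\le j\le d-1=3k$, and a pendant path of length two can only be attached at $v_j$ with $2\le j\le 3k-1$.

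For the forward direction, assume $T$ is a caterpillar whose pendant vertices lie only at vertices of some $V_s$. Since $T$ is a caterpillar, it contains no pendant path of length two, so (F3) and (F4) cannot occur. Every index in $V_s$ is congruent to $0$ or $1$ modulo $3$, so no pendant vertex sits at $v_{3a-1}$, which excludes (F1). For (F2), suppose there were pendant vertices at $v_{3a}$ and $v_{3b+1}$ with $a\le b$. Membership $v_{3a}\in V_s$ forces $3a\ge 3s$. Membership $v_{3b+1}\in V_s$ forces $3b+1\le 3s-2$, that is, $b\le s-1$. Together these give $b<s\le a$, contradicting $a\le b$.

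For the converse, assume $T$ contains none of (F1)--(F4) as a subtree containing $P$.

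First we show that $T$ is a caterpillar. Suppose some vertex is at distance $2$ from $P$, through a path of length two attached at $v_j$, where $2\le j\le 3k-1$. We split by the residue of $j$ modulo $3$.
\begin{itemize}
\item If $j\equiv 2\pmod 3$, then keeping only the first vertex of that path gives (F1), with $a=(j+1)/3\in\{1,\dots,k\}$.
\item If $j=3a$, then $1\le a\le k-1$, and we obtain (F4).
\item If $j=3a+1$, then $a\ge1$ because $j\ge2$, and $a\le k-1$ because $j\le 3k-1$; this gives (F3).
\end{itemize}
Each case is excluded, so every off-path vertex is a pendant vertex attached at some internal $v_j$.

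Next we locate the attachment points. By (F1), no attachment index is congruent to $2$ modulo $3$. Let $3b+1$ be the largest attachment index congruent to $1$, and note that $b\le k-1$ since $3k+1=d$ is not an attachment index. If no such index exists, put $b=0$. Let $3a$ be the smallest attachment index congruent to $0$, with $1\le a\le k$. If both kinds occur, excluding (F2) forces $a\ge b+1$.

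Now set $s=b+1$. This satisfies $1\le s\le k$, both by $b\le k-1$ and, when a $0$-residue index exists, by $s\le a\le k$. Every attachment index congruent to $1$ is at most $3b+1=3s-2$. Every attachment index congruent to $0$ is at least $3a\ge 3s$. Hence all pendant vertices are attached at vertices of $V_s$, as required.

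The only delicate point is the bookkeeping of the index ranges in each case. One must check that the parameters $a,b$ produced fall inside the ranges demanded in (F1)--(F4), and this is exactly where the diametral-path restrictions at the two ends of $P$ are used.
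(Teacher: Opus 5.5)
Your proof is correct and follows essentially the same route as the paper: residue $2$ is excluded by (F1), an index $\equiv 0$ preceding an index $\equiv 1$ is excluded by (F2), and vertices at distance two are excluded by (F1), (F3) or (F4) according to the residue of the root, with the diameter ruling out the boundary positions. You simply make explicit the index ranges, the choice $s=b+1$, and the forward direction, all of which the paper treats as immediate from the description of the sets $V_s$.
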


\begin{proof}
Suppose first that every component outside $P$ is a single pendant vertex, and let $I=\{i: v_i \text{ has an added pendant neighbour}\}$.
The set $\{v_i:i\in I\}$ is contained in some $V_s$ precisely when no $i\in I$ is congruent to $2$ modulo $3$, and every $i\in I$ congruent to $1$ occurs before every $i\in I$ congruent to $0$.
Failure of the first condition gives \textup{(F1)}, while failure of the second gives \textup{(F2)}.

Now suppose that a component outside $P$ is nontrivial.
Its center together with one of its leaves gives a pendant path of length two.
If its root is congruent to $2$ modulo $3$, retaining only the center already gives \textup{(F1)}.
If its root is congruent to $1$ or $0$ modulo $3$, the diameter condition excludes the boundary positions, and the component gives \textup{(F3)} or \textup{(F4)}, respectively.
The converse follows immediately from the description of the sets $V_s$.
\end{proof}

\begin{lemma}
\label{lem:F-spectral}
If $T'$ is one of \textup{(F1)--(F4)}, then
$$ m_{T'}[0,1)=\left\lceil\frac{d+1}{3}\right\rceil+1. $$
\end{lemma}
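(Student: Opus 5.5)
We prove the statement by cutting each structure along a single edge and applying Lemma~\ref{lem:cut-additivity}. The cut is chosen so that one side contains a known nonzero vertex at the cut end: a path whose order is divisible by $3$ (Lemma~\ref{lem:path-count}), or a caterpillar tail as in Lemma~\ref{lem:nonzero-tail}. The other side should be a tree whose count is already known: a path, a path with one wrongly placed leaf (Lemma~\ref{lem:wrong-leaf}), or $T_6$. Throughout, $d=3k+1$, so $\lceil(d+1)/3\rceil=k+1$.

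\textbf{(F1).} The leaf sits at $v_{3a-1}$. Delete the edge $v_{3a-1}v_{3a}$. The right component is the path $v_{3a}\cdots v_{3k+1}$ on $3(k-a)+2$ vertices. Its count is $k-a+1$, but its end $v_{3a}$ need not be nonzero. So instead look at the left piece, the path $v_0\cdots v_{3a-1}$ together with a leaf at $v_{3a-1}$. It contains the path $v_0\cdots v_{3a-2}$ plus the path $v_{3a-1}w$.

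Cleaner: delete $v_{3a-2}v_{3a-1}$. The left part $v_0\cdots v_{3a-2}$ is a path on $3a-1$ vertices, so its end is not guaranteed nonzero. Delete $v_{3a-3}v_{3a-2}$ instead. Then the left part is $P_{3a-2}$, and the right part contains $v_{3a-2}\cdots v_{3k+1}$ with a leaf at its second vertex.

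Since either choice may fail, the plan is as follows. First choose the cut so that one side is $P_{3m}$, whose end is nonzero. Then compute the other side, a path with a single leaf at the second or third vertex, directly by the same cut-and-count trick recursively, or by comparison with Lemma~\ref{lem:wrong-leaf}. Concretely, cut at $v_{3a-1}v_{3a}$ with $w$ on the left. The left piece is a path of diameter $3a-1\equiv2 \pmod 3$ with a leaf at $v_{3a-1}$, viewed as the path $v_0\cdots v_{3a-2}v_{3a-1}w$ of order $3a+1$ with a leaf $v_{3a-1}$-neighbour. Each piece's count is computed, the end-nonzero condition is verified via Lemma~\ref{lem:nonzero-tail}, and summing should give $k+2$.

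\textbf{(F2).} Cut the edge $v_{3a}v_{3a+1}$. The left piece is $v_0\cdots v_{3a}$ with a leaf at $v_{3a}$. This is the path $v_0\cdots v_{3a}w$ of order $3a+2$ plus nothing else, so it is a caterpillar whose nonzero end is $v_0$, not the cut end. The right piece $v_{3a+1}\cdots v_{3k+1}$ with a leaf at $v_{3b+1}$ has smallest leaf position $p=3(b-a)\equiv0$, so Lemma~\ref{lem:nonzero-tail} does not apply directly. I would therefore cut at $v_{3a-1}v_{3a}$ instead. The left piece is then $P_{3a}$, whose end $v_{3a-1}$ is nonzero, with count $a$. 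The right piece has diameter $3(k-a)+1$ and leaves at relative positions $0$ and $3(b-a)+1$. Its leaf at relative position $0$ lengthens its diametral path to $3(k-a)+2$, and the other leaf then lies at relative position $3(b-a)+2\not\equiv1$. Lemma~\ref{lem:wrong-leaf} then gives $k-a+2$, for a total of $k+2$.

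\textbf{(F1), revisited.} The same trick applies. Cut at $v_{3a-3}v_{3a-2}$, for $a\ge2$, so that the left piece is $P_{3a-3}$ with count $a-1$. The right piece has diameter $3(k-a)+3\equiv0$, which is not covered by Lemma~\ref{lem:wrong-leaf}. So for (F1) the better approach is to extend: the leaf at $v_{3a-1}$ together with $v_{3a-1}\cdots v_{3k+1}$ forms a path of order $3(k-a)+4$. I would handle this case by letting $w$ together with the right portion serve as a new diametral path. Choose the side of length $\equiv2 \pmod 3$, and a wrong-leaf situation arises for Lemma~\ref{lem:wrong-leaf}.

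\textbf{(F3) and (F4).} Here the pendant $P_2$, say $v_jxy$, together with the segment of $P$ forms a copy of $T_6$ or a longer spider. For (F3), cut $v_{3a-1}v_{3a}$ and $v_{3a+2}v_{3a+3}$. The outer pieces $P_{3a}$ and $P_{3(k-a)-1}$, or their appropriate variants, have nonzero ends, and the middle piece is $T_6$ centred at $v_{3a+1}$ with count $3$. Summing gives $a+3+(k-a-1)=k+2$; the end-nonzero condition for $P_{3(k-a)-1}$ must be checked, cutting on the other side if needed. (F4) is analogous, centred at $v_{3a}$.

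The main obstacle is bookkeeping: in each case, choosing the cut so that the piece adjacent to the cut has a genuinely nonzero end. I expect to resolve this by always placing the $P_{3m}$ piece at the cut, as in Lemma~\ref{lem:path-count}, and pushing all irregularity into the other piece, which is then evaluated by Lemma~\ref{lem:wrong-leaf} or by $m_{T_6}[0,1)=3$.
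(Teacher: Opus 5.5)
\textbf{There is a genuine gap: your proposal contains no valid argument for (F1), and your (F3)/(F4) decomposition misidentifies its pieces.}

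Your (F2) argument is correct. Your single cut $v_{3a-1}v_{3a}$ handles $a<b$ and $a=b$ at once, whereas the paper splits these cases.

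\textbf{(F1).} The cut you finally commit to, $v_{3a-1}v_{3a}$, does not work. One side is the bare path $v_0\cdots v_{3a-1}w$ of order $3a+1$, since the leaf sits at an end of that piece. The other side is $P_{3(k-a)+2}$. Since $1$ is a Laplacian eigenvalue of $P_n$ only when $3\mid n$, neither side has a nonzero vertex. So Lemma~\ref{lem:cut-additivity} cannot be applied, and Lemma~\ref{lem:nonzero-tail} says nothing about a bare path. Your revisited cut $v_{3a-3}v_{3a-2}$ has the same defect: it leaves $P_{3a-2}$, not $P_{3a-3}$.

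Your fallback of rereading the path through $w$ and ``choosing the side of length $\equiv 2\pmod 3$'' also fails. The paths $wv_{3a-1}v_{3a-2}\cdots v_0$ and $wv_{3a-1}v_{3a}\cdots v_{3k+1}$ have lengths $3a$ and $3(k-a)+3$, both divisible by $3$. In fact, splitting off any $P_{3m}$ from either end of an (F1) structure leaves a smaller (F1) structure, so Lemma~\ref{lem:wrong-leaf} is never reached.

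Within your strategy, the only way to finish is to peel off both $P_{3a-3}$ and $P_{3(k-a)}$ and evaluate the remaining $T_6$ on $\{v_{3a-3},\dots,v_{3a+1},w\}$ directly. This gives $(a-1)+3+(k-a)=k+2$. That is exactly the paper's argument, but it does not appear in your (F1) discussion.

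There is also a one-cut fix using your own toolkit: cut $v_{3a}v_{3a+1}$.
\begin{itemize}
\item The left piece is $v_0\cdots v_{3a}$ with a leaf at $v_{3a-1}$. Since $v_{3a-1}$ lies in the minimum dominating set $\{v_1,v_4,\dots,v_{3a-2},v_{3a-1}\}$, Lemma~\ref{lem:caterpillar-equality} gives count $a+1$.
\item Its end $v_{3a}$ is nonzero by Lemma~\ref{lem:nonzero-tail}, because $3a-(3a-1)=1$.
\item The right piece $P_{3(k-a)+1}$ contributes $k-a+1$, for a total of $k+2$.
\end{itemize}

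\textbf{(F3) and (F4).} Write the pendant path as $v_{3a+1}uw$. The individual counts in your (F3) computation are wrong.
\begin{itemize}
\item The middle piece $\{v_{3a},v_{3a+1},v_{3a+2},u,w\}$ of your two cuts has five vertices. It is the spider with legs $1,1,2$, not $T_6$. Its count is $2$, since its domination number is $2$ and its diameter is $3$.
\item The right piece $P_{3(k-a)-1}$ contributes $k-a$, not $k-a-1$. Your total $k+2$ comes out right only because these two errors cancel.
\item $P_{3(k-a)-1}$ has no nonzero vertex. The second cut is legitimate only because $v_{3a+2}$ is nonzero in the spider: take the vector equal to $1$ at $v_{3a}$, $-1$ at $v_{3a+2}$, and $0$ elsewhere. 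You left this unchecked.
\end{itemize}

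The paper avoids all of this with the single cut $v_{3a-1}v_{3a}$. The right component is the path $wuv_{3a+1}\cdots v_d$ of length $3(k-a)+2$, with the leaf $v_{3a}$ attached at its third vertex. Lemma~\ref{lem:wrong-leaf} then gives $k-a+2$.

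Your ``(F4) is analogous'' inherits the same misidentification. There, the same single cut leaves the bare path $wuv_{3a}\cdots v_d$ of order $3(k-a)+4$, and Lemma~\ref{lem:path-count} gives $a+(k-a+2)$.
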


\begin{proof}
For \textup{(F1)}, let the added leaf be adjacent to $v_{3a-1}$.
The subtree induced by
$$ \{v_{3a-3},v_{3a-2},v_{3a-1},v_{3a},v_{3a+1}\} $$
together with the added leaf is $T_6$.
Cut $v_{3a-4}v_{3a-3}$ when $a\ge2$, and cut $v_{3a+1}v_{3a+2}$ when $a\le\frac{d-4}{3}$.
The outer components, when present, are paths of diameters $3a-4$ and $d-3a-2$, both congruent to $2$ modulo $3$.
Their joining ends are nonzero by Lemma~\ref{lem:path-count}.
Repeated use of Lemma~\ref{lem:cut-additivity} and the value $m_{T_6}[0,1)=3$ gives
$$ m_{T'}[0,1)=(a-1)+3+\frac{d-3a-1}{3}=\left\lceil\frac{d+1}{3}\right\rceil+1, $$
where an absent path contributes zero.

For \textup{(F2)}, let the two leaves be attached to $v_{3a}$ and $v_{3b+1}$.
Assume first that $a<b$, and delete $v_{3a+1}v_{3a+2}$.
The component containing $v_0$ has diameter $3a+1$ and is an extremal caterpillar, so Lemma~\ref{lem:caterpillar-equality} gives $a+1$ eigenvalues less than $1$.
Its joining vertex $v_{3a+1}$ is nonzero by Lemma~\ref{lem:nonzero-tail}.
The component containing $v_d$ has diameter $d-3a-2\equiv2\pmod3$, and its remaining leaf is attached at a position congruent to $2$ modulo $3$ relative to its displayed diametral path.
Lemma~\ref{lem:wrong-leaf} gives $\frac{d-3a+2}{3}$ eigenvalues less than $1$.
Hence Lemma~\ref{lem:cut-additivity} yields $$m_{T'}[0,1)=\left\lceil\frac{d+1}{3}\right\rceil+1.$$

If $a=b$, delete $v_{3a-1}v_{3a}$.
The component containing $v_0$ is a path of diameter $3a-1$ and contributes $a$.
In the component containing $v_d$, the path beginning with the leaf adjacent to $v_{3a}$ is diametral, has diameter $d-3a+1$, and the other leaf is attached at a forbidden residue class.
Lemma~\ref{lem:wrong-leaf} gives $\frac{d-3a+5}{3}$ eigenvalues less than $1$.
The joining end of the component containing $v_0$ is nonzero, and therefore $$m_{T'}[0,1)=a+\frac{d-3a+5}{3}=\left\lceil\frac{d+1}{3}\right\rceil+1.$$

For \textup{(F3)}, write the pendant path as $v_{3a+1}uw$ and delete $v_{3a-1}v_{3a}$.
The component containing $v_0$ is a path of diameter $3a-1$ and contributes $a$.
Since this component is $P_{3a}$, its joining end $v_{3a-1}$ is nonzero for the eigenvalue $1$ by Lemma~\ref{lem:path-count}.
In the other component, $wuv_{3a+1}v_{3a+2}\cdots v_d$ is a diametral path of length $d-3a+1$, while $v_{3a}$ is a leaf attached at a forbidden position, and Lemma~\ref{lem:wrong-leaf} gives $\frac{d-3a+5}{3}$ eigenvalues less than $1$.
Hence Lemmas~\ref{lem:cut-additivity} and \ref{lem:wrong-leaf} give
$$ m_{T'}[0,1)=a+\frac{d-3a+5}{3}=\left\lceil\frac{d+1}{3}\right\rceil+1. $$

For \textup{(F4)}, write the pendant path as $v_{3a}uw$ and delete $v_{3a-1}v_{3a}$.
The component containing $v_0$ is a path of diameter $3a-1$, while the other component is the path $wuv_{3a}v_{3a+1}\cdots v_d$ of diameter $d-3a+2$.
The joining end of the component containing $v_0$ is nonzero, so Lemmas~\ref{lem:cut-additivity} and \ref{lem:path-count} give
$$ m_{T'}[0,1)=a+\frac{d-3a+5}{3}=\left\lceil\frac{d+1}{3}\right\rceil+1. $$
\end{proof}

\begin{corollary}
\label{cor:d-one}
Let $T$ have diameter $d\equiv1\pmod3$, and suppose every vertex of $T$ has distance at most $2$ from a diametral path $P$.
If $m_T[0,1)=\left\lceil\frac{d+1}{3}\right\rceil$, then $T$ is a caterpillar whose pendant vertices are attached only at vertices contained in a minimum dominating set of $P$.
\end{corollary}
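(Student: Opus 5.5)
We argue by contraposition, combining Lemma~\ref{lem:F-structure}, Lemma~\ref{lem:F-spectral}, and a monotonicity step that passes from a forbidden subtree to $T$. Let $T$ have diameter $d=3k+1$, with every vertex at distance at most $2$ from the diametral path $P$. Suppose $T$ is \emph{not} a caterpillar whose pendant vertices lie only at vertices of some $V_s$. We will show that $m_T[0,1)\ge\lceil(d+1)/3\rceil+1$, contradicting the hypothesis $m_T[0,1)=\lceil(d+1)/3\rceil$.

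The first step is to locate a forbidden subtree. By Lemma~\ref{lem:F-structure}, $T$ contains a subtree $T'$ containing $P$ that is isomorphic to one of \textup{(F1)--(F4)}, with $P$ as its displayed path. Note that $T'$ still has diameter $d$: it contains $P$ and is a subtree of $T$. So the value $\lceil(d+1)/3\rceil+1$ in Lemma~\ref{lem:F-spectral} is computed with the same $d$.

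The second step passes from $T'$ to $T$. Since both are trees and $T'\subseteq T$, we can obtain $T'$ from $T$ by repeatedly deleting a leaf of the current tree that lies outside $V(T')$. Such a leaf always exists while the current tree properly contains $T'$: a tree that properly contains the subtree $T'$ has a leaf outside $T'$, for instance the endpoint farthest from $T'$ in any component of the complement. Each intermediate graph is a tree, so Lemma~\ref{lem:pendant-monotonicity} applies at every step. This gives
\[
m_T[0,1)\ge m_{T'}[0,1)=\left\lceil\frac{d+1}{3}\right\rceil+1,
\]
where the equality is Lemma~\ref{lem:F-spectral}. This is the desired contradiction.

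The third step records what the conclusion means. $T$ is a caterpillar with pendant vertices only at vertices of some $V_s$, and each $V_s$ was shown to be a minimum dominating set of $P$. This is exactly the statement of the corollary.

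No step here is a real obstacle, since the substantive work is already done in the two lemmas. The only point needing care is to apply Lemma~\ref{lem:F-structure} to the same diametral path $P$ given in the hypothesis, and to note that the leaf-deletion order keeps every intermediate graph a tree.
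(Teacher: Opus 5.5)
Your proposal is correct and is essentially the paper's proof. Lemma~\ref{lem:F-structure} supplies a subtree $T'\supseteq P$ of type \textup{(F1)--(F4)}; Lemma~\ref{lem:pendant-monotonicity}, applied along a sequence of leaf deletions, gives $m_T[0,1)\ge m_{T'}[0,1)$; and Lemma~\ref{lem:F-spectral} then yields the contradiction. The only difference is cosmetic: you justify the deletion order by the general fact that a tree properly containing a subtree has a leaf outside it, while the paper uses the star structure of the components of $T-V(P)$ and restores each center before its leaves.
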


\begin{proof}
Otherwise Lemma~\ref{lem:F-structure} gives a subtree $T'$ containing $P$ and having one of the forms \textup{(F1)--(F4)}.
Since $T'$ contains the whole diametral path $P$ and every component of $T-V(P)$ is a star with its center adjacent to $P$, the vertices of $T-T'$ can be restored successively as pendant vertices: for each component of $T-V(P)$ not contained in $T'$, first restore its center if necessary and then restore its leaves.
Lemmas~\ref{lem:pendant-monotonicity} and \ref{lem:F-spectral} give $m_T[0,1)\ge m_{T'}[0,1)=\left\lceil\frac{d+1}{3}\right\rceil+1$, a contradiction.
\end{proof}

\subsection{The case \texorpdfstring{$d\equiv0\pmod3$}{d congruent to 0 modulo 3}}

Let $d=3k$, where $k\ge2$, and regard $P$ as an ordered path.
For $1\le i\le d-1$, define
$$ \ell(i)=\begin{cases}0,&i\equiv1\pmod3,\\1,&i\equiv0\pmod3,\\2,&i\equiv2\pmod3.\end{cases} $$
For a leaf-only extremal caterpillar, the labels of the occupied positions are nondecreasing from left to right.
This is exactly the description of the positions where pendant vertices may be attached, given in the introduction.

Whether $P$ is labeled from left to right or from right to left, we consider the following structures:
\begin{description}
\item[\textup{(C1)}] Pendant vertices are attached to $v_i$ and $v_j$, where $1\le i<j\le d-1$ and $\ell(i)>\ell(j)$.
\item[\textup{(C2)}] A pendant path of length two is attached to $v_i$, where $2\le i\le d-2$ and $i\not\equiv0\pmod3$.
\item[\textup{(C3)}] A pendant path of length two is attached to $v_{3a}$ and an additional pendant vertex is attached to $v_{3b}$, where $1\le a\le b\le k-1$.
\end{description}

\begin{lemma}
\label{lem:C-structure}
Let $T$ have diameter $3k$, where $k\ge2$, and suppose every vertex of $T$ has distance at most $2$ from $P$.
Then $T$ belongs to class \textup{(i)} or \textup{(ii)} of Theorem~\ref{thm:classification-intro} if and only if $T$ contains none of \textup{(C1)--(C3)}, whether $P$ is labeled from left to right or from right to left.
\end{lemma}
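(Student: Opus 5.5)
This is a purely combinatorial statement, so I would prove it by a case analysis on the components of $T-V(P)$, as in the proof of Lemma~\ref{lem:F-structure}. By hypothesis each such component is a single pendant vertex or a star whose centre is adjacent to $P$. The diameter condition forbids stars at $v_1$ and $v_{d-1}$, and forbids any attachment at $v_0$ or $v_d$.

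I would first record what classes (i) and (ii) look like in terms of the labels $\ell$. For class (i) with $d=3k$, a minimum dominating set of $P$ containing $v_1$ and $v_{d-1}$ has one of two gap patterns: one gap $1$ and the others $3$, or two gaps $2$ and the others $3$. I would check that the corresponding index sets are exactly the maximal sets of internal indices whose $\ell$-labels are nondecreasing left to right. That is, some $v_{3j+1}$'s (label $0$), then some $v_{3j}$'s (label $1$), then some $v_{3j+2}$'s (label $2$), each block consecutive in its residue class. The two-gaps-of-$2$ pattern corresponds to using all three labels, and the gap-$1$ pattern to skipping label $1$ (a jump $v_{3j+1}\to v_{3j+2}$). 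Hence a leaf-only caterpillar is in class (i) if and only if its occupied positions have nondecreasing labels. Reversing $P$ sends $\ell$-values $0,1,2$ at index $i$ to $2,1,0$ at index $d-i$, so this condition is symmetric. Consequently a leaf-only caterpillar avoids (C1) in the given orientation if and only if it is in class (i).

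For the forward direction, suppose $T$ avoids (C1)--(C3). If every component outside $P$ is a single vertex, the observation above puts $T$ in class (i). Otherwise some component is a star with centre $u$ attached at $v_i$, and a centre plus one leaf is a pendant $P_3$ at $v_i$. Avoiding (C2) forces $i\equiv0\pmod3$, so $i=3r$ with $1\le r\le k-1$; at most one such star can exist, which I would derive from (C3) below. Avoiding (C3) in both orientations forbids any other attachment at any $v_{3b}$, whether a leaf or the centre of another star, including $b=r$ itself. The case $b=r$ uses $a=b$, and stars at other positions also trigger (C3) once we retain only the centre. So the remaining attachments are single leaves at indices $\not\equiv0\pmod3$. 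Avoiding (C1), applied to the star centre $u$ (label $1$ at $v_{3r}$) paired with each leaf, forces leaves at label $0$ to lie left of $3r$ and leaves at label $2$ to lie right of it. This is exactly class (ii), with $t\ge1$ leaves on $u$.

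For the converse, I would check that no member of class (i) or (ii) contains (C1), (C2) or (C3) as a subtree containing $P$ in either orientation. In class (i) there are no pendant $P_3$'s, and the labels are nondecreasing, so (C1) fails. In class (ii) the only pendant $P_3$ sits at $v_{3r}$, so (C2) is excluded. (C3) would need another attachment at some $v_{3b}$, and none exists. For (C1), the occupied positions are the $v_{3j+1}$ with $j<r$, then $v_{3r}$, then the $v_{3j+2}$ with $j\ge r$, which are nondecreasing in label. Here the star centre counts as the pendant vertex at $v_{3r}$.

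The main obstacle is the bookkeeping in the first step: checking that the two gap patterns correspond exactly to monotone label sequences, including the boundary cases where a label block is empty. I would handle it by writing a minimum dominating set as $u_1=v_1,\dots,u_{k+1}=v_{d-1}$ with gaps summing to $3k-2$, where each gap is at most $3$. This leaves exactly a deficit of $1$ (one gap $2$) or $2$ (one gap $1$ or two gaps $2$). I would then track how each short gap shifts the residue class mod $3$.
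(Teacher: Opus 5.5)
Your proposal is correct and follows essentially the same route as the paper. It uses the same steps: the gap/label analysis showing a leaf-only caterpillar lies in class (i) exactly when (C1) is avoided; (C2) to force every star root to some $v_{3r}$; (C3) in both orientations to exclude any other attachment at a multiple of $3$; and (C1) paired with the star centre to confine the remaining leaves to $\{v_1,\ldots,v_{3r-2}\}\cup\{v_{3r+2},\ldots,v_{3k-1}\}$.

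One small slip: for $d=3k$ the $k$ gaps sum to $3k-2$, so the deficit is exactly $2$. The ``deficit $1$ (one gap $2$)'' option belongs to the case $d\equiv1\pmod 3$ and never occurs here.
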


\begin{proof}
If every component outside $P$ is a single pendant vertex, the attachment positions are contained in a minimum dominating set of $P$ precisely when their labels are nondecreasing.
Indeed, normalize a minimum dominating set as
$$ D=\{v_{a_1},\ldots,v_{a_{k+1}}\},\qquad a_1=1,\ a_{k+1}=3k-1, $$
replacing $v_0$ by $v_1$ and $v_{3k}$ by $v_{3k-1}$ if necessary, which removes no internal attachment vertex. Set $t_j=a_{j+1}-a_j$. Since $D$ dominates $P$, we have $1\le t_j\le3$, and
$$ \sum_{j=1}^{k}t_j=(3k-1)-1=3k-2,\qquad \text{so}\qquad \sum_{j=1}^{k}(3-t_j)=2. $$
Thus either one gap equals $1$ and the remaining gaps equal $3$, or two gaps equal $2$ and the remaining gaps equal $3$. In the former case the labels $\ell(a_j)$ change once, from $0$ to $2$; in the latter they increase from $0$ to $1$ and then to $2$; in both cases they are nondecreasing. Conversely, if the occupied positions have nondecreasing labels, then placing a gap of size $1$ where the label changes from $0$ to $2$, or two gaps of size $2$ where the label changes from $0$ to $1$ and from $1$ to $2$, and gaps of size $3$ elsewhere, yields a minimum dominating set of $P$ containing all of them.
Thus failure of class \textup{(i)} is exactly \textup{(C1)}.

Suppose now that $T-V(P)$ has a nontrivial component.
Selecting its center and one of its leaves produces a pendant path of length two.
If its root is not divisible by $3$, this is \textup{(C2)}.
Hence assume that its root is $v_{3a}$.
For $T$ to belong to class \textup{(ii)}, this must be the unique nontrivial component, and every ordinary pendant vertex must be attached in
$$ E_a=\{v_1,v_4,\ldots,v_{3a-2}\}\cup\{v_{3a+2},v_{3a+5},\ldots,v_{3k-1}\}. $$
A second nontrivial component rooted at a multiple of $3$, or an ordinary leaf at a multiple of $3$, gives \textup{(C3)} after a suitable choice of the labeling direction of $P$.
When only the center of the selected nontrivial component is retained, it becomes a pendant vertex adjacent to $v_{3a}$.
Thus an ordinary leaf to the left of $v_{3a}$ outside the first part of $E_a$, or to the right outside the second part, gives a label inversion with this pendant vertex and hence gives \textup{(C1)}.
These alternatives exhaust all failures of class \textup{(ii)}.
The converse is immediate from the two constructions in the theorem.
\end{proof}

\begin{lemma}
\label{lem:C-spectral}
If $T'$ is one of \textup{(C1)--(C3)}, then
$$ m_{T'}[0,1)\ge \left\lceil\frac{d+1}{3}\right\rceil+1. $$
\end{lemma}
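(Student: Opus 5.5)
The plan follows the proof of Lemma~\ref{lem:F-spectral}: cut each structure into pieces whose counts are already known, and glue them back with Lemma~\ref{lem:cut-additivity}. Here $d=3k$, so $\lceil(d+1)/3\rceil=k+1$ and we must reach $k+2$. Since only a lower bound is required, we may also discard pendant vertices by Lemma~\ref{lem:pendant-monotonicity} and contract by Corollary~\ref{cor:contraction}. This lets us reduce each structure to a minimal configuration. The nonzero joining ends will come from Lemma~\ref{lem:path-count} (an end of $P_{3m}$) and Lemma~\ref{lem:nonzero-tail}.

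\textbf{(C2).} A pendant path $v_iuw$ is attached at $v_i$ with $i\not\equiv0\pmod3$.
\begin{itemize}
\item If $i\equiv1\pmod3$, write $i=3a+1$ and cut $v_{3a-1}v_{3a}$; this needs $a\ge1$, which holds because $i\ge2$. The left piece is $P_{3a}$, contributing $a$, and its end is nonzero. In the right piece, $wuv_{3a+1}\cdots v_{3k}$ is a diametral path of length $3k-3a+1$, and $v_{3a}$ is a leaf at position $1$ from $w$'s far end. Relative to the other end it sits at an index $\not\equiv1$ in a path of diameter $\equiv1$.
\item I will instead first delete the leaf $v_{3a}$'s role by cutting $v_{3a}v_{3a+1}$. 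The left piece is then $P_{3a+1}$, which is not of the right form, so I choose the cut differently. Cut $v_{3a}v_{3a+1}$ so that the right piece is the spider with legs of lengths $2$ and $3k-3a-1$ from $v_{3a+1}$. That right piece is a path of length $3k-3a+1$ with nothing else attached.
\item Alternatively, and more cleanly, cut $v_{3a-2}v_{3a-1}$ when $a\ge1$: the left piece is $P_{3a-1}$. To avoid fiddling, my fallback for residue $1$ is to mirror it: relabelling from the right turns $i\equiv1$ into $3k-i\equiv2$.
\item So assume $i=3a+2$ with $0\le a\le k-2$. Cut $v_{3a+2}v_{3a+3}$. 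The right piece is $P_{3k-3a-2}$, which is not a multiple of $3$, so that cut fails. Instead cut $v_{3a+3}v_{3a+4}$. The right piece is then $P_{3(k-a-1)}$, with a nonzero end, contributing $k-a-1$. The left piece has diameter $3a+4$ (the path $v_0\cdots v_{3a+2}uw$) plus a pendant $v_{3a+3}$ at position $3a+3\equiv0$.
\item Contract the edge $v_0v_1$ if needed, or apply Lemma~\ref{lem:F-spectral} for (F1), (F4) directly. This pendant is a forbidden position for $d'\equiv1$, so the left piece has $(a+2)+1$, giving a total of $k+2$.
\end{itemize}
Verifying which of (F1)–(F4) or Lemma~\ref{lem:wrong-leaf} applies in each residue is routine.

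\textbf{(C1).} Leaves are attached at $v_i$ and $v_j$ with $i<j$ and $\ell(i)>\ell(j)$. I choose a cut between $i$ and $j$ of the form $v_{3c}v_{3c+1}$ or $v_{3c-1}v_{3c}$. The aim is that one side becomes a caterpillar of diameter $\equiv2$ whose leaf is at a wrong position (Lemma~\ref{lem:wrong-leaf}, contributing one extra eigenvalue). The other side should be a path, or an extremal caterpillar, with a nonzero end given by Lemma~\ref{lem:nonzero-tail}. The inversions to handle are $(\ell(i),\ell(j))\in\{(1,0),(2,0),(2,1)\}$, and each one dictates where to cut. For instance, with $i\equiv0$ and $j\equiv1$, cut just right of $v_i$, so that $v_0\cdots v_{i+1}$ carries a leaf at $i$, which is a wrong position for diameter $i+1\equiv1$. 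Any leftover leaf that is not at an allowed position is handled by passing to a smaller instance via Lemma~\ref{lem:pendant-monotonicity}.

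\textbf{(C3).} Cut between $v_{3a}$ and $v_{3b}$ when $a<b$, so that each side is of type (F4)-like, with diameter $\equiv1$ and a pendant $P_2$ at a multiple of $3$, or contains a wrong leaf. When $a=b$, the vertex $v_{3a}$ carries both a $P_2$ and a leaf. Contracting produces $T_6$-like local pieces, and there I expect to use an ad hoc cut at $v_{3a-1}v_{3a}$ together with Lemma~\ref{lem:wrong-leaf} on the path starting at $w$.

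\textbf{Main obstacle.} The hardest step is the bookkeeping of residues: for every case, one must choose a cut so that one side is a $P_{3m}$, or has a provably nonzero joining vertex, while the other side is exactly a structure already counted. I would tabulate all residue cases before writing anything out.
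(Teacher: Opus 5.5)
Your (C2) argument works. After reversing to $i=3a+2$, the cut $v_{3a+3}v_{3a+4}$ leaves $P_{3(k-a-1)}$, with nonzero end, and an \textup{(F1)} piece of diameter $3a+4$, giving $(a+3)+(k-a-1)=k+2$. Your stated reason is off, though. The leaf $v_{3a+3}$ hangs from $v_{3a+2}$, i.e.\ at index $3a+2\equiv2$ of $v_0\cdots v_{3a+2}uw$. A leaf at index $3a+3\equiv0$, as you wrote, would be an allowed position for diameter $\equiv1$.

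The genuine gap is that \textup{(C1)} and \textup{(C3)} are not proved. The whole content of this lemma is choosing, in each residue case, a cut whose joining edge has a certified nonzero endpoint, and you defer exactly this as ``routine''. The one cut you describe, for $i\equiv0$, $j\equiv1$, fails under either reading:
\begin{itemize}
\item If you cut $v_iv_{i+1}$, neither endpoint is a nonzero vertex, so Lemma~\ref{lem:cut-additivity} is unavailable. $v_i$ is adjacent to a leaf, forcing $z(v_i)=0$. On the right, the leaf at $v_j$ forces $z(v_j)=0$, and propagating from the end $v_{i+1}$ over $j-i-1\equiv0$ steps then forces $z(v_{i+1})=0$.
\item If you cut $v_{i+1}v_{i+2}$, the piece $v_0\cdots v_{i+1}$ with a leaf at $v_i$ is not a wrong-leaf configuration. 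Here $v_i=v_{d'-1}$ with $d'=i+1\equiv1$, and this lies in every $V_s$, so the piece is extremal and any gain must come from the other side. When $j=i+1$, this cut puts both leaves on the left, and again neither endpoint of the cut edge is nonzero.
\end{itemize}
The inversions $(2,0)$ and $(2,1)$ are not addressed at all.

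Cuts that do work for \textup{(C1)}:
\begin{itemize}
\item For $(1,0)$, cut $v_{i-1}v_i$. The left piece is $P_i$ with nonzero end. The right piece, read from the leaf at $v_i$, is an \textup{(F1)} structure of diameter $d-i+1\equiv1$, with the other leaf at relative index $j-i+1\equiv2$. This also covers $j=i+1$.
\item For $(2,0)$, cut $v_{j-1}v_j$. The left piece is an extremal caterpillar of diameter $j-1\equiv0$, whose end $v_{j-1}$ is nonzero by Lemma~\ref{lem:nonzero-tail} since $j-1-i\equiv1$. The right piece is the path from the leaf at $v_j$ through $v_j\cdots v_d$, on $d-j+2\equiv1$ vertices. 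The gain comes from the two ceilings, not from a wrong leaf.
\item $(2,1)$ reverses to $(1,0)$.
\end{itemize}

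In \textup{(C3)} with $a<b$, an ``(F4)-like'' left piece needs a cut $v_cv_{c+1}$ with $c\equiv1$ and $3a+4\le c\le 3b-2$. No such $c$ exists when $b=a+1$. Cutting $v_{3a+1}v_{3a+2}$ works for all $a<b$:
\begin{itemize}
\item Read along $wuv_{3a}\cdots v_0$, the left piece has diameter $3a+2$ and the wrong leaf $v_{3a+1}$ at index $2$, giving $a+2$ by Lemma~\ref{lem:wrong-leaf}.
\item The right piece is an extremal caterpillar of diameter $3(k-a)-2$, giving $k-a$.
\item $v_{3a+2}$ is nonzero by Lemma~\ref{lem:nonzero-tail}, since $v_{3b}$ sits at relative index $3(b-a)-2\equiv1$.
\end{itemize}
Your idea for $a=b$ is the right one: cut $v_{3a-1}v_{3a}$ and apply Lemma~\ref{lem:wrong-leaf} to $wuv_{3a}\cdots v_d$, whose extra leaf is at index $2$. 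It still has to be carried out.
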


\begin{proof}
For \textup{(C1)}, the possible label inversions are $(1,0)$, $(2,1)$ and $(2,0)$.
Reversing $P$ transforms $(2,1)$ into $(1,0)$, so it is enough to treat $(1,0)$ and $(2,0)$.

First suppose $i\equiv0\pmod3$ and $j\equiv1\pmod3$.
Delete $v_{i-1}v_i$.
The left component is a path of diameter $d_1=i-1\equiv2\pmod3$, and its joining end is nonzero.
In the other component, the path beginning with the leaf at $v_i$ is diametral and has diameter $d_2=d-i+1\equiv1\pmod3$.
Relative to this path, the leaf at $v_j$ is attached at a position congruent to $2$ modulo $3$, so it is an \textup{(F1)} structure.
Lemma~\ref{lem:F-spectral} gives $m_{T'_2}[0,1)\ge\lceil(d_2+1)/3\rceil+1$.
Since $d_1+d_2=d$, Lemma~\ref{lem:cut-additivity} gives
$$ m_{T'}[0,1)\ge\frac{d_1+1}{3}+\frac{d_2+2}{3}+1=\left\lceil\frac{d+1}{3}\right\rceil+1. $$

Now suppose $i\equiv2\pmod3$ and $j\equiv1\pmod3$.
Delete $v_{j-1}v_j$.
The component $T'_1$ containing $v_0$ has diameter $d_1=j-1\equiv0\pmod3$ and is an extremal caterpillar, since its unique attachment position $v_i$ is contained in a minimum dominating set of its displayed diametral path.
The other component $T'_2$ is a path of diameter $d_2=d-j+1\equiv0\pmod3$.
The vertex $v_i$ is the rightmost vertex of the displayed path having a pendant neighbour outside the displayed path, and $j-1-i\equiv1\pmod3$.
Thus the joining vertex $v_{j-1}$ is nonzero by Lemma~\ref{lem:nonzero-tail}.
Lemmas~\ref{lem:caterpillar-equality}, \ref{lem:path-count} and \ref{lem:cut-additivity} yield
$$ m_{T'}[0,1)=\left(\frac{d_1}{3}+1\right)+\left(\frac{d_2}{3}+1\right)=\left\lceil\frac{d+1}{3}\right\rceil+1. $$

For \textup{(C2)}, write the pendant path as $v_iuw$.
If $i=2$ or $i=d-2$, reverse $P$ if necessary and assume $i=2$.
Delete $v_3v_4$.
The left component is $T_6$, and the right component is a path of diameter $d-4\equiv2\pmod3$ whose joining end is nonzero.
Hence
$$ m_{T'}[0,1)=3+\frac{d-3}{3}=\left\lceil\frac{d+1}{3}\right\rceil+1. $$

Assume now that the root is not a boundary position.
After reversing $P$ if necessary, we may suppose $i\equiv1\pmod3$ and $4\le i\le d-5$.
Delete $v_{i-2}v_{i-1}$ and $v_{i+2}v_{i+3}$.
The middle component is $T_6$, and the two outer components are paths of diameters $i-2$ and $d-i-3$, both congruent to $2$ modulo $3$.
Their joining ends are nonzero, so repeated use of Lemma~\ref{lem:cut-additivity} gives
$$ m_{T'}[0,1)=\frac{i-1}{3}+3+\frac{d-i-2}{3}=\left\lceil\frac{d+1}{3}\right\rceil+1. $$

For \textup{(C3)}, write the pendant path as $v_{3a}uw$.
Suppose first that $a<b$, and delete $v_{3a+1}v_{3a+2}$.
In the left component $T'_1$, the path $wuv_{3a}v_{3a-1}\cdots v_0$ is diametral and has diameter $d_1=3a+2\equiv2\pmod3$, while $v_{3a+1}$ is a leaf attached at its third vertex.
Lemma~\ref{lem:wrong-leaf} gives $m_{T'_1}[0,1)=a+2$.
The right component $T'_2$ has diameter $d_2=d-3a-2\equiv1\pmod3$ and is an extremal caterpillar, since its unique attachment position $v_{3b}$ is contained in a minimum dominating set of its displayed diametral path.
Thus $m_{T'_2}[0,1)=\frac{d-3a}{3}$.
The attachment vertex $v_{3b}$ lies at distance $3(b-a)-2\equiv1\pmod3$ from the joining end $v_{3a+2}$.
Lemma~\ref{lem:nonzero-tail} shows that $v_{3a+2}$ is nonzero.
Therefore $$m_{T'}[0,1)=a+2+\frac{d-3a}{3}=\left\lceil\frac{d+1}{3}\right\rceil+1.$$
If $a=b$, delete $v_{3a-1}v_{3a}$.
The component containing $v_0$ is a path of diameter $3a-1$ and contributes $a$.
In the other component, $wuv_{3a}v_{3a+1}\cdots v_d$ is a diametral path of length $d-3a+2\equiv2\pmod3$, and the additional leaf at $v_{3a}$ is attached at its third vertex.
Lemma~\ref{lem:wrong-leaf} gives $\frac{d-3a}{3}+2$ eigenvalues less than $1$.
The joining end of the left path is nonzero, and hence $$m_{T'}[0,1)=a+\frac{d-3a}{3}+2=\left\lceil\frac{d+1}{3}\right\rceil+1.$$
\end{proof}

\begin{corollary}
\label{cor:d-zero}
Let $T$ have diameter $d\equiv0\pmod3$ with $d\ge6$, and suppose every vertex of $T$ has distance at most $2$ from a diametral path $P$.
If $m_T[0,1)=\left\lceil\frac{d+1}{3}\right\rceil$, then $T$ belongs to class \textup{(i)} or \textup{(ii)} of Theorem~\ref{thm:classification-intro}.
\end{corollary}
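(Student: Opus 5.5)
The argument runs exactly parallel to Corollary~\ref{cor:d-one}: we argue by contradiction through Lemmas~\ref{lem:C-structure} and \ref{lem:C-spectral}. Assume $d=3k$ with $k\ge2$, that every vertex of $T$ lies within distance $2$ of the diametral path $P$, and that $m_T[0,1)=\lceil(d+1)/3\rceil$. Suppose further that $T$ lies in neither class \textup{(i)} nor class \textup{(ii)} of Theorem~\ref{thm:classification-intro}. Lemma~\ref{lem:C-structure} then provides, for one of the two labelings of $P$, a subtree $T'\subseteq T$ that contains $P$ and has one of the forms \textup{(C1)--(C3)}. Since $T'$ contains $P$, its diameter is still $d$. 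Lemma~\ref{lem:C-spectral} therefore gives
\[
m_{T'}[0,1)\ge\left\lceil\frac{d+1}{3}\right\rceil+1 .
\]

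The next step is to pass from $T'$ back to $T$ without decreasing the count. By the distance hypothesis, every component of $T-V(P)$ is either a single vertex adjacent to $P$ or a star whose center is adjacent to $P$. We rebuild $T$ from $T'$ one vertex at a time. For each component of $T-V(P)$, we first restore its center if $T'$ does not already contain it; this center is adjacent to a vertex of $P\subseteq T'$, so it enters as a pendant vertex. We then restore its missing leaves, each adjacent to a center already present, so each also enters as a pendant vertex. Every intermediate graph is therefore a tree obtained from the previous one by adding a pendant vertex.

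Lemma~\ref{lem:pendant-monotonicity}, applied once at each step, shows that $m[0,1)$ never decreases along this sequence. Hence
\[
m_T[0,1)\ge m_{T'}[0,1)\ge\left\lceil\frac{d+1}{3}\right\rceil+1,
\]
which contradicts the assumed equality. So $T$ must belong to class \textup{(i)} or \textup{(ii)}.

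The only point needing care is the ordering of the reconstruction, so that each added vertex really is pendant at the moment it is added. Restoring centers before their leaves guarantees this, because a center's only neighbour in $T$ outside its own star is a vertex of $P$. It is also worth recording that Lemma~\ref{lem:C-structure} is stated for both labeling directions of $P$. This makes the obstruction found by Lemma~\ref{lem:C-structure} exactly the kind handled by Lemma~\ref{lem:C-spectral}, whose proof already allows reversing $P$. Beyond these checks I expect no genuine obstacle, since all the spectral work is contained in Lemma~\ref{lem:C-spectral}.
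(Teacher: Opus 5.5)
Your proposal is correct and follows the paper's proof essentially verbatim. Both argue by contradiction: Lemma~\ref{lem:C-structure} yields a subtree $T'\supseteq P$ of type \textup{(C1)}--\textup{(C3)}, the missing vertices are restored as pendant vertices (centers before leaves), and Lemmas~\ref{lem:pendant-monotonicity} and~\ref{lem:C-spectral} then give $m_T[0,1)\ge\lceil(d+1)/3\rceil+1$.
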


\begin{proof}
Otherwise Lemma~\ref{lem:C-structure} gives a subtree $T'$ containing $P$ and having one of the forms \textup{(C1)--(C3)}.
Since $T'$ contains the whole diametral path $P$ and every component of $T-V(P)$ is a star with its center adjacent to $P$, the vertices of $T-T'$ can be restored successively as pendant vertices: for each component of $T-V(P)$ not contained in $T'$, first restore its center if necessary and then restore its leaves.
Lemmas~\ref{lem:pendant-monotonicity} and \ref{lem:C-spectral} give $m_T[0,1)\ge m_{T'}[0,1)\ge \left\lceil\frac{d+1}{3}\right\rceil+1$, a contradiction.
\end{proof}

\section{Proof of Theorem 1.4}\label{sec:4}

We now prove the complete characterization stated in the introduction.

\begin{proof}[Proof of Theorem~\ref{thm:classification-intro}]
We first prove sufficiency.

Assume that $T$ satisfies (i), and let $D$ be the minimum dominating set of $P$ appearing in the statement.
Since the added pendant vertices are adjacent only to internal vertices of $P$, the path $P$ remains diametral.
Indeed, the distance from such a vertex adjacent to $v_i$ to either endpoint of $P$ is at most $d$, and the distance between two such vertices adjacent to $v_i$ and $v_j$ is at most $1+|i-j|+1\le 1+(d-2)+1=d$.
Thus the diameter is exactly $d$.

Since all added pendant vertices are adjacent to vertices of $D$, the set $D$ dominates all of $T$.
Hence $\gamma(T)\le |D|=\lceil(d+1)/3\rceil$.
By Lemma~\ref{lem:dom-lower}, $\gamma(T)\ge \lceil(d+1)/3\rceil$, so $\gamma(T)=\lceil(d+1)/3\rceil$.
Theorem~\ref{thm:HJT} and Lemma~\ref{lem:spectral-lower} give $\lceil(d+1)/3\rceil \le m_T[0,1) \le \gamma(T) = \lceil(d+1)/3\rceil$.
Therefore $m_T[0,1)=\lceil(d+1)/3\rceil$.

Now assume that $T$ satisfies (ii), and let $u$ be the center of the star $K_{1,t}$.
First we check that the path $P=v_0v_1\cdots v_{3k}$ is a diametral path.
Let $w$ be any vertex outside $P$, and let $v_i$ be the vertex of $P$ closest to $w$.
If $w$ is a pendant vertex adjacent directly to $P$, then $1\le i\le3k-1$, so $d_T(w,v_0)=i+1\le3k$ and $d_T(w,v_{3k})=3k-i+1\le3k$.
If $w=u$, then $i=3r$, and $d_T(u,v_0)=3r+1\le3k$ and $d_T(u,v_{3k})=3k-3r+1\le3k$.
If $w$ is a pendant vertex of $K_{1,t}$, then $d_T(w,v_0)=3r+2\le3k$ and $d_T(w,v_{3k})=3k-3r+2\le3k$, because $1\le r\le k-1$.

It remains to check distances between two vertices outside $P$.
Two pendant vertices adjacent directly to $P$ can only occur at indices between $1$ and $3k-1$, so their distance is at most $1+(3k-2)+1=3k$.
A pendant vertex of $K_{1,t}$ and a pendant vertex adjacent on the left have distance at most $2+(3r-1)+1=3r+2\le3k$, and a pendant vertex of $K_{1,t}$ and a pendant vertex adjacent on the right have distance at most $2+(3k-1-3r)+1=3k-3r+2\le3k$.
The same estimates with $2$ replaced by $1$ cover the center $u$.
Thus every pair of vertices is at distance at most $3k$, while $d_T(v_0,v_{3k})=3k$.
Hence the diameter is exactly $d=3k$.

Define
$$ D = \{v_1,v_4,\ldots,v_{3r-2}\} \cup \{u\} \cup \{v_{3r+2},v_{3r+5},\ldots,v_{3k-1}\}.$$
The vertices $v_1,v_4,\ldots,v_{3r-2}$ dominate the left part of the path up to $v_{3r-1}$.
The vertex $u$ dominates $v_{3r}$ and every vertex of $K_{1,t}$.
The vertices $v_{3r+2},v_{3r+5},\ldots,v_{3k-1}$ dominate the right part of the path from $v_{3r+1}$ to $v_{3k}$.
They also dominate all pendant vertices adjacent directly to $P$.
Thus $D$ dominates $T$.
Its size is $r+1+(k-r)=k+1 = \lceil(3k+1)/3\rceil$.
As above, Lemma~\ref{lem:dom-lower}, Theorem~\ref{thm:HJT}, and Lemma~\ref{lem:spectral-lower} imply $m_T[0,1)=\lceil(d+1)/3\rceil$.

We now prove necessity.
Assume $m_T[0,1)=\lceil(d+1)/3\rceil$.
If $d=0$, then $T\cong P_1$, which satisfies (i).
If $d=1$, then $T\cong P_2$, which also satisfies (i).
Hence assume $d\ge2$.
By Corollary~\ref{cor:distance-two}, there is a diametral path $P=v_0v_1\cdots v_d$ such that every vertex of $T$ has distance at most $2$ from $P$.

If $d\equiv2\pmod3$, the characterization of Guo, Xue and Liu~\cite{GXL} shows that $T$ is obtained from $P$ by adding pendant vertices only at $v_1,v_4,\ldots,v_{d-1}$.
Thus $T$ satisfies (i).

If $d\equiv1\pmod3$, Corollary~\ref{cor:d-one} shows that $T$ satisfies (i).

Finally, suppose $d\equiv0\pmod3$.
If $d=3$, then $T$ is a double star and its two central vertices form a minimum dominating set of a diametral path, so $T$ satisfies (i).
If $d\ge6$, Corollary~\ref{cor:d-zero} shows that $T$ satisfies (i) or (ii).

The proof is complete.
\end{proof}

\begin{corollary}
\label{cor:d-mod-2}
If $d\equiv2\pmod3$, then the trees attaining equality are exactly the caterpillars obtained from a diametral path $v_0v_1\cdots v_d$ by adding pendant vertices only at the vertices $v_1,v_4,\ldots,v_{d-1}$.
\end{corollary}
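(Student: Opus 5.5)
The statement follows by specializing Theorem~\ref{thm:classification-intro} to $d\equiv2\pmod3$. For the converse direction, observe that the caterpillars described satisfy class~(i). For the forward direction, show that class~(ii) cannot occur and that class~(i) offers no choices beyond the one listed.

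\emph{Class (ii) is impossible.} Class~(ii) requires $d=3k$, which is incompatible with $d\equiv2\pmod3$. So every extremal tree lies in class~(i). It is therefore a caterpillar obtained from a diametral path $P=v_0\cdots v_d$ by adding pendant vertices at internal vertices of some minimum dominating set $D$ of $P$.

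\emph{The dominating set is forced.} Write $d=3k+2$. Then $P$ has $3k+3$ vertices and $\gamma(P)=k+1$. Each vertex dominates at most three vertices of $P$, and $(k+1)\cdot 3=3k+3$. Hence the closed neighbourhoods $N[x]$, $x\in D$, must partition $V(P)$ exactly, each containing three vertices. This has two consequences. First, no endpoint lies in $D$, since $N[v_0]$ has only two vertices. Second, the vertex dominating $v_0$ must be $v_1$, with $N[v_1]=\{v_0,v_1,v_2\}$. Inductively, the next vertex to dominate is $v_3$, and because the neighbourhoods are disjoint with three elements each, the next element of $D$ is $v_4$, and so on. Therefore $D=\{v_1,v_4,\ldots,v_{3k+1}\}$, and $v_{3k+1}=v_{d-1}$. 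All these vertices are internal, so the internal-vertex restriction removes nothing.

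\emph{Conclusion.} The attachment positions form an arbitrary subset of $\{v_1,v_4,\ldots,v_{d-1}\}$, which is the stated family. Sufficiency is Lemma~\ref{lem:caterpillar-equality}, or equivalently the sufficiency part of Theorem~\ref{thm:classification-intro}. The only step needing care is the uniqueness of the minimum dominating set, and the exact-partition counting argument settles it.
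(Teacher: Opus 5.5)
Your proof is correct and follows the paper's route: specialize Theorem~\ref{thm:classification-intro}, rule out class~(ii) by the congruence, and use that $P_{d+1}$ has the unique minimum dominating set $\{v_1,v_4,\ldots,v_{d-1}\}$. The only difference is that you prove this uniqueness with the exact-partition counting argument, whereas the paper simply asserts it.
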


\begin{proof}
If $d\equiv2\pmod3$, then $d+1\equiv0\pmod3$.
The path $P_{d+1}$ has a unique minimum dominating set, namely $\{v_1,v_4,\ldots,v_{d-1}\}$.
Alternative (ii) cannot occur because it requires $d\equiv0\pmod3$.
The result follows from Theorem~\ref{thm:classification-intro}.
\end{proof}

\begin{corollary}
\label{cor:domination-equivalence}
For every tree $T$ of diameter $d$, $m_T[0,1) = \lceil(d+1)/3\rceil$ if and only if $\gamma(T) = \lceil(d+1)/3\rceil$.
\end{corollary}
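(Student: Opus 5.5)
Both directions follow from the sandwich inequalities already used in the paper; the only nontrivial input is the classification in Theorem~\ref{thm:classification-intro}.

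For the direction $\gamma(T)=\lceil(d+1)/3\rceil\Rightarrow m_T[0,1)=\lceil(d+1)/3\rceil$, I combine Theorem~\ref{thm:HJT} with Lemma~\ref{lem:spectral-lower}:
$$\left\lceil\frac{d+1}{3}\right\rceil\le m_T[0,1)\le\gamma(T)=\left\lceil\frac{d+1}{3}\right\rceil.$$
This forces equality throughout. No structural information about $T$ is needed here.

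For the converse, suppose $m_T[0,1)=\lceil(d+1)/3\rceil$.
\begin{enumerate}
\item Theorem~\ref{thm:classification-intro} gives a diametral path $P=v_0\cdots v_d$ such that $T$ lies in class (i) or class (ii).
\item If $T$ is in class (i), the minimum dominating set $D$ of $P$ from the statement dominates every added pendant vertex, since each one is adjacent to a vertex of $D$. Hence $\gamma(T)\le|D|=\gamma(P_{d+1})=\lceil(d+1)/3\rceil$.
\item If $T$ is in class (ii), the explicit set
$$\{v_1,\ldots,v_{3r-2}\}\cup\{u\}\cup\{v_{3r+2},\ldots,v_{3k-1}\}$$
constructed in the sufficiency part of the proof of Theorem~\ref{thm:classification-intro} dominates $T$ and has $k+1=\lceil(d+1)/3\rceil$ elements.
\item In both cases Lemma~\ref{lem:dom-lower} supplies the reverse inequality $\gamma(T)\ge\lceil(d+1)/3\rceil$, so $\gamma(T)=\lceil(d+1)/3\rceil$.
\end{enumerate}

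The small cases are already handled inside the proof of Theorem~\ref{thm:classification-intro}. For $d\le1$ the tree is $P_1$ or $P_2$, and for $d=3$ it is a double star; each of these lies in class (i). The argument above therefore covers every $d$.

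There is no real obstacle beyond invoking the classification. The only care needed is to use the dominating sets already exhibited in the sufficiency proof rather than rederiving them, and to note that $|D|=\lceil(d+1)/3\rceil$ holds because $D$ is a \emph{minimum} dominating set of the path $P_{d+1}$.
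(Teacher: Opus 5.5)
Your proof is correct and follows the paper's argument essentially verbatim. The forward direction is the sandwich $\lceil(d+1)/3\rceil\le m_T[0,1)\le\gamma(T)$ from Theorem~\ref{thm:HJT} and Lemma~\ref{lem:spectral-lower}. The converse invokes Theorem~\ref{thm:classification-intro}, reuses the dominating sets of size $\lceil(d+1)/3\rceil$ exhibited in its sufficiency proof, and takes the matching lower bound from Lemma~\ref{lem:dom-lower}.
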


\begin{proof}
If $\gamma(T) = \lceil(d+1)/3\rceil$, then Theorem~\ref{thm:HJT} and Lemma~\ref{lem:spectral-lower} imply $\lceil(d+1)/3\rceil \le m_T[0,1) \le \gamma(T) = \lceil(d+1)/3\rceil$.
Hence equality holds.

Conversely, if equality holds for $m_T[0,1)$, then Theorem~\ref{thm:classification-intro} shows that $T$ satisfies (i) or (ii).
In both cases, the sufficiency proof explicitly constructs a dominating set of size $\lceil(d+1)/3\rceil$.
By Lemma~\ref{lem:dom-lower}, this dominating set is minimum.
\end{proof}

\section{Proof of Theorem 1.5}\label{sec:5}

We now count the equality trees directly from the structural characterization in Theorem~\ref{thm:classification-intro}.
The proof is a direct algebraic enumeration based on the binomial theorem and the standard count of nonnegative integer solutions.

\begin{lemma}
\label{lem:counting}
For every $n\ge3$,
$$ |\calT_n^*|\le(n+2)2^{n-3}. $$
\end{lemma}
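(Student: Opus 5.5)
By Theorem~\ref{thm:classification-intro}, every tree in $\calT_n^*$ either belongs to class (i) or to class (ii) for some diameter $d$ and some diametral path. The plan is to bound the number of unlabeled trees in each class by the number of ways to choose the combinatorial data that determine them. We do not worry about overcounting, since only an upper bound is needed. In every case the tree is determined by three things: the length $d$ of $P$, a sequence of nonnegative integers recording how many pendant vertices hang at each admissible position, and, for class (ii), the pair $(r,t)$.

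\emph{Class (i).} Fix $d$ with $2\le d\le n-1$ and put $q=\lceil(d+1)/3\rceil$. A caterpillar of class (i) is specified by three choices:
\begin{itemize}
\item an admissible minimum dominating set $D$ of $P$, normalized as in the proof of Lemma~\ref{lem:C-structure} so that all its vertices are internal;
\item a distribution of the $n-d-1$ extra pendant vertices among the $q$ vertices of $D$;
\item nothing further, since the tree is then fixed.
\end{itemize}
The number of distributions is $\binom{n-d-2+q}{q-1}$ by stars and bars. The number of admissible sets $D$ depends only on $d\bmod 3$. For $d\equiv2\pmod 3$ there is exactly one. For $d\equiv1\pmod 3$ there are at most $q-1$, one for each position of the gap of length $2$. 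For $d\equiv0\pmod 3$ there are at most $(q-1)+\binom{q}{2}$, from one gap of length $1$ or two gaps of length $2$.

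Rather than compute these sums exactly, I would get a cleaner bound by encoding. Read the caterpillar as a word along $v_1,\dots,v_{d-1}$, and record at each position either a step along $P$ or a pendant vertex. This injects the class-(i) caterpillars of order $n$ into binary strings of length about $n-3$, since the internal path vertices and the pendant vertices together number $n-2$. The encoding should give a bound of the form $c\,2^{n-3}$ summed over $d$, or more directly an at most $(n+1)2^{n-3}$-type count once the caterpillar's spine is chosen. Equivalently, I can sum $\binom{n-d-2+q}{q-1}\cdot O(q^2)$ over $d$ and compare it with $\sum_j\binom{n-3}{j}=2^{n-3}$, using $\binom{a}{b}\le\binom{n-3}{b}$ and the fact that $q$ determines $d$ up to three values.

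\emph{Class (ii).} Here $d=3k$. The tree is determined by three choices:
\begin{itemize}
\item $r\in\{1,\dots,k-1\}$;
\item $t\ge1$;
\item a distribution of the remaining $n-3k-2-t$ pendant vertices among the $k$ positions of $E_r$.
\end{itemize}
Summing over $t$ first gives at most $\binom{n-3k-2+k}{k}$ choices, again by stars and bars. The factor $r\le k$ then contributes at most a linear factor.

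The final step is to add the two classes and show that the total is at most $(n+2)2^{n-3}$. This follows from the identity $\sum_m\binom{n-3}{m}=2^{n-3}$, with the linear factors $q$ and $k$ absorbed into the $n+2$. I expect the main obstacle to be this bookkeeping: arranging the binomial coefficients so that each lies below $\binom{n-3}{m}$ for distinct or boundedly repeated $m$, and tracking constants so that exactly $n+2$ comes out. If the direct route gets messy, I would fall back on the crude bound that a caterpillar of order $n$ is determined by a binary word of length $n-3$ together with the position data. The small-$n$ cases near $n=3$ would be checked by hand.
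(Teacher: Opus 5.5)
Your proposal never actually establishes the inequality. The final comparison is deferred as ``bookkeeping'', and both accountings you sketch for it are flawed.

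In the direct route, your count $(q-1)+\binom q2$ of admissible sets $D$ for $d\equiv0\pmod3$ is quadratic in $q$, not linear. After $\binom{n-d-2+q}{q-1}\le\binom{n-3}{q-1}$, the identity $\sum_m\binom{n-3}{m}=2^{n-3}$ only yields a bound of order $q_{\max}^2\,2^{n-3}$ with $q_{\max}\approx n/3$. That exceeds $(n+2)2^{n-3}$ for large $n$. Closing this route would need the restriction $q\le\lceil n/3\rceil$ plus a lower-tail estimate for binomial sums, which you do not supply.

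Your description of the encoding is also off. The word along $v_1,\dots,v_{d-1}$ consists of $d-2$ step symbols and $n-d-1$ pendant symbols, so it has length exactly $n-3$ and already determines $d$. It therefore bounds class (i) by $2^{n-3}$ in total, and there is nothing to sum over $d$. A class-(i) bound of ``$(n+1)2^{n-3}$-type'' would leave room of only $2^{n-3}$ for class (ii). That is less than the roughly $\frac n3\,2^{n-3}$ your linear-factor estimate gives there.

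The repair uses only pieces you already have:
\begin{itemize}
\item The encoding gives at most $2^{n-3}$ trees in class (i).
\item In class (ii), summing over $t$ (hockey-stick identity) and over $r$ gives at most $\sum_{k\ge2}(k-1)\binom{n-2k-3}{k}$. Such a tree has $n\ge3k+3$ vertices, so $k-1\le n/3$. Also $\binom{n-2k-3}{k}\le\binom{n-3}{k}$, so class (ii) contributes at most $\frac n3\,2^{n-3}$.
\item Hence $|\calT_n^*|\le(\frac n3+1)2^{n-3}\le(n+2)2^{n-3}$ for all $n\ge3$. There are no small cases to check and no need to hit $n+2$ exactly.
\end{itemize}

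The paper goes the opposite way and discards all admissibility restrictions. Allowing pendant vertices at every vertex of $P$, endpoints included, gives $\binom{n-1}{d}$ configurations per $d$ and $2^{n-1}$ in total. Allowing the star at any of the $d+1$ path vertices gives $(d+1)\binom{n-2}{d+1}$ per $d$ and $(n-2)2^{n-3}$ in total. These add up to exactly $(n+2)2^{n-3}$, so that constant is an artifact of the relaxation. Exploiting the structure, as you do, costs more bookkeeping but gives the sharper constant $\frac n3+1$. Either bound suffices for Theorem~\ref{thm:asymptotic-intro}, which only needs $|\calT_n^*|=O(n2^n)$.
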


\begin{proof}
By Theorem~\ref{thm:classification-intro}, every tree in $\calT_n^*$ belongs to class \textup{(i)} or class \textup{(ii)}.

We first count class \textup{(i)}.
Fix the diameter $d$ and choose an orientation $P=v_0v_1\cdots v_d$ of the diametral path occurring in the characterization.
Let $a_i$ be the number of pendant vertices attached to $v_i$.
For an upper bound, we disregard the restrictions imposed by the minimum dominating set and allow $a_i\ge0$ at every path vertex.
Then
$$ a_0+a_1+\cdots+a_d=n-d-1. $$
Therefore, the number of such vectors is
$ \binom{n-1}{d}. $
Every tree in class \textup{(i)} has at least one such representation, while different vectors or orientations may produce the same unlabeled tree.
Consequently, the number of trees in class \textup{(i)} is at most
$$ \sum_{d=0}^{n-1}\binom{n-1}{d}=2^{n-1}. $$

We next count class \textup{(ii)}.
Again fix an oriented path $P=v_0v_1\cdots v_d$, and deliberately disregard the congruence and position restrictions in the characterization.
There are at most $d+1$ choices for the path vertex to which the star center is joined.
Let $s\ge2$ be the order of the attached star component, including its center, and let $a_i\ge0$ be the number of pendant vertices attached directly to $v_i$.
The order condition becomes
$$ s-2+a_0+a_1+\cdots+a_d=n-d-3. $$
For fixed $d$, the number of nonnegative solutions is $\binom{n-2}{d+1}$.
Hence the number of trees in class \textup{(ii)} is at most
$$ \sum_{d=0}^{n-3}(d+1)\binom{n-2}{d+1}=(n-2)2^{n-3}. $$

Adding the two estimates gives
$$ |\calT_n^*|\le2^{n-1}+(n-2)2^{n-3}=(n+2)2^{n-3}. $$
\end{proof}

\begin{lemma}[{\cite{Otter}}]
\label{lem:Otter}
Let $\calT_n$ denote the set of unlabeled trees on $n$ vertices.
Then
$$ |\calT_n| \sim C_0\alpha^n n^{-5/2}, $$
where $C_0>0$ and $\alpha\approx 2.955765\ldots>\frac52$ is Otter's tree constant.
\end{lemma}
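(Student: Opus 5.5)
Lemma~\ref{lem:Otter} is Otter's classical asymptotic for unlabeled trees, so we do not re-derive it. The proof is the standard generating-function argument, and we indicate it only briefly.

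Let $r(x)=\sum_{n\ge1} r_n x^n$ be the generating function of rooted unlabeled trees. By P\'olya's enumeration it satisfies
$$ r(x)=x\exp\Bigl(\sum_{k\ge1}\frac{r(x^k)}{k}\Bigr). $$
One first shows that $r$ has radius of convergence $\rho=1/\alpha$ with $0<\rho<1$. For the upper bound, compare $r_n$ with the counts of rooted plane trees, which gives $r_n\le 4^n$. For the lower bound, $r_n$ grows at least exponentially, which follows from crude constructions such as caterpillars.

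Next, set $F(x,y)=x\exp\bigl(y+\sum_{k\ge2}r(x^k)/k\bigr)$. The terms with $k\ge2$ are analytic for $|x|<\rho^{1/2}$, a disc strictly larger than $|x|<\rho$. The implicit equation $y=F(x,y)$ therefore satisfies the hypotheses of the smooth implicit-function schema. The singularity occurs where $F_y=1$, that is, where $r(\rho)=1$. This yields a square-root singularity
$$ r(x)=1-b\sqrt{1-x/\rho}+O(1-x/\rho), $$
and transfer theorems give $r_n\sim c\,\alpha^n n^{-3/2}$.

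Then apply Otter's dissimilarity formula
$$ t(x)=r(x)-\tfrac12\bigl(r(x)^2-r(x^2)\bigr). $$
In the expansion, the $\sqrt{1-x/\rho}$ terms cancel because $r(\rho)=1$. The leading singular term is therefore of order $(1-x/\rho)^{3/2}$, which transfers to $C_0\alpha^n n^{-5/2}$ with $C_0>0$.

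The main obstacle is this cancellation together with verifying aperiodicity, meaning that $\rho$ is the unique dominant singularity. Numerically evaluating the fixed point gives $\alpha\approx2.9557$.

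For our purposes, the only consequence used later is that $\alpha>5/2>2$. Combined with Lemma~\ref{lem:counting}, this gives
$$ \frac{|\calT_n^*|}{|\calT_n|}\le\frac{(n+2)2^{n-3}}{C_0\alpha^n n^{-5/2}}(1+o(1))=O\bigl(n^{7/2}(2/\alpha)^n\bigr)\to0. $$
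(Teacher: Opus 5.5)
Your proposal is correct and matches the paper, which also gives no proof of this lemma and simply cites Otter's classical theorem. Your sketch of the standard P\'olya--Otter singularity analysis is accurate: $r$ has a square-root singularity at $\rho=1/\alpha$ with $r(\rho)=1$, and the $\sqrt{1-x/\rho}$ term cancels in $t=r-\tfrac12\bigl(r^2-r(x^2)\bigr)$.

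The one claim you assert without justification, $C_0>0$, also checks out. Writing $r=1+\varepsilon$ gives $r-\tfrac12 r^2=\tfrac12-\tfrac12\varepsilon^2$, and the implicit equation forces $\varepsilon=-bZ+\tfrac{b^2}{3}Z^2+O(Z^3)$ with $Z=\sqrt{1-x/\rho}$. So the $Z^3$ coefficient is $b^3/3>0$, and as you note, only $\alpha>2$ is used later.
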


Combining Lemma~\ref{lem:counting} with Lemma~\ref{lem:Otter} immediately implies the asymptotic result.

\begin{proof}[Proof of Theorem~\ref{thm:asymptotic-intro}]
Lemma~\ref{lem:counting} gives $|\calT_n^*|=O(n2^n)$.
On the other hand, Lemma~\ref{lem:Otter} gives $|\calT_n| \sim C_0\alpha^n n^{-5/2}$, where $C_0>0$ and $\alpha\approx 2.955765\ldots>\frac52$.
Hence
$$ \frac{|\calT_n^*|}{|\calT_n|} = O\!\left(n^{7/2}\left(\frac{2}{\alpha}\right)^n\right) \xrightarrow[n\to\infty]{}0. $$
This proves the theorem.
\end{proof}

\begin{corollary}
\label{cor:almost-all-plus-one}
Almost all trees satisfy $m_T[0,1) \ge \lceil(d(T)+1)/3\rceil+1$.
Equivalently,
$$ \lim_{n\to\infty} \frac{ \left| \left\{ T\in\calT_n: m_T[0,1)\ge \left\lceil\frac{d(T)+1}{3}\right\rceil+1 \right\} \right| }{|\calT_n|} = 1.$$
\end{corollary}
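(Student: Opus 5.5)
This is a direct complement argument built on Theorem~\ref{thm:asymptotic-intro}. Fix $n$ and let $T\in\calT_n$. By Lemma~\ref{lem:spectral-lower}, every tree satisfies $m_T[0,1)\ge\lceil(d(T)+1)/3\rceil$. Since $m_T[0,1)$ is an integer, exactly one of two alternatives holds: either $m_T[0,1)=\lceil(d(T)+1)/3\rceil$, or $m_T[0,1)\ge\lceil(d(T)+1)/3\rceil+1$.

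It follows that the set
$$\left\{T\in\calT_n: m_T[0,1)\ge\left\lceil\tfrac{d(T)+1}{3}\right\rceil+1\right\}$$
is exactly $\calT_n\setminus\calT_n^*$. Its cardinality is therefore $|\calT_n|-|\calT_n^*|$.

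Dividing by $|\calT_n|$, the ratio in the statement equals $1-|\calT_n^*|/|\calT_n|$. Theorem~\ref{thm:asymptotic-intro} shows that $|\calT_n^*|/|\calT_n|\to0$; alternatively, one can combine Lemma~\ref{lem:counting} with Lemma~\ref{lem:Otter} to get $|\calT_n^*|/|\calT_n|=O(n^{7/2}(2/\alpha)^n)$. Either way the ratio tends to $1$, which is the claim.

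The only point needing care is the dichotomy in the first paragraph. It relies on the lower bound of Lemma~\ref{lem:spectral-lower} holding for all trees, so that no tree can fall strictly below the ceiling bound. Given that, there is no real obstacle, since all the substantive work has already been done in Theorem~\ref{thm:asymptotic-intro}.
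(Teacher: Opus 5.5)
Your proposal is correct and follows the paper's proof: you use Lemma~\ref{lem:spectral-lower} to identify the complement of the numerator family with $\calT_n^*$, then apply Theorem~\ref{thm:asymptotic-intro}. The integrality remark you add is the implicit step that makes this complement identification exact.
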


\begin{proof}
By Lemma~\ref{lem:spectral-lower}, every tree $T$ satisfies $m_T[0,1)\ge \lceil(d(T)+1)/3\rceil$.
Thus the complement of the family in the numerator is precisely $\calT_n^*$.
The result follows from Theorem~\ref{thm:asymptotic-intro}.
\end{proof}

\section*{Statement on AI usage}

The proof strategy in Section~\ref{sec:5} was developed with the assistance of GPT-5.4. By contrast, both the proof ideas and the detailed arguments in Sections~\ref{sec:1}--\ref{sec:4} were developed entirely by the authors, without any use of AI.

\end{document}